\newcommand{\boA}{\mathcal{A}}
\newcommand{\N}{\mathbb{N}}
\newcommand{\R}{\mathbb{R}}
\newcommand{\norm}[1]{\left\| #1 \right\|}
\DeclareMathOperator{\Var}{Var}
\numberwithin{equation}{section}
\newcommand{\Monm}{\Mon^c}
\DeclareMathOperator{\Mon}{Mon}
\newcommand{\E}{\mathbb{E}}
\newcommand{\p}{ { \mathbb P} }
\newcommand{\F}{\mathcal{F}}
\newcommand{\I}{\mathds{1}}
\theoremstyle{plain}
\newtheorem{thm}{Theorem}[section]
\newtheorem{lma}[thm]{Lemma}
\newtheorem{rmk}[thm]{Remark}
\newtheorem{prop}[thm]{Proposition}
\newtheorem{defn}[thm]{Definition}
\theoremstyle{definition}
\begin{document}

\title{\large \bf The almost sure invariance principle for unbounded functions of
expanding maps}

\author{\Large J. Dedecker\footnote{Universit\'e Paris Descartes, Laboratoire MAP5
and CNRS UMR 8145, 45 rue des Saints P\`eres,
75270 Paris Cedex 06, France.  E-mail: jerome.dedecker@parisdescartes.fr}, S. Gou\"ezel\footnote{Universit\'e Rennes
1, IRMAR and CNRS UMR 6625.
E-mail: sebastien.gouezel@univ-rennes1.fr}, and F.
Merlev\`ede\footnote{Universit\'e Paris Est-Marne la Vall\'ee, LAMA
and CNRS UMR 8050.
Florence.Merlevede@univ-mlv.fr}}

\date{}

\maketitle

\begin{abstract}
We consider two classes of piecewise expanding maps $T$ of $[0,1]$: a
class of uniformly expanding maps for which the Perron-Frobenius
operator has  a spectral gap in the space of bounded variation
functions, and a class of expanding maps with a neutral fixed point
at zero. In both cases, we give a large class of unbounded functions
$f$ for which the partial sums of $f\circ T^i$ satisfy an almost sure
invariance principle. This class contains piecewise monotonic
functions (with a finite number of branches) such that:
\begin{itemize}
\item For uniformly expanding maps, they are square integrable
    with respect to the  absolutely continuous invariant
    probability measure.
\item For maps having a neutral fixed point at zero, they satisfy
    an (optimal) tail condition with respect to the absolutely
    continuous invariant probability measure.
\end{itemize}

\par\vskip 1cm
\noindent {\it Mathematics Subject Classifications (2000):} 37E05, 37C30, 60F15. \\
{\it Key words:} Expanding maps, intermittency,
strong invariance principle.\\
\end{abstract}

\section{Introduction and main results}

Our goal in this article is to prove the almost sure invariance
principle with error rate $o(\sqrt{n \ln\ln n})$ for several classes
of one-dimensional dynamical systems, under very weak integrability
or regularity assumptions. We will consider uniformly expanding maps,
and maps with an indifferent fixed point, as defined below.

Several classes of uniformly expanding maps of the interval are
considered in the literature. We will use the very general definition
of Rychlik (1983) to allow infinitely many branches. For notational
simplicity, we will assume that there is a single absolutely
invariant measure and that it is mixing (the general case can be
reduced to this one by looking at subintervals and at an iterate of
the map). We will also need to impose a nontrivial restriction on the
density of the measure: it should be bounded away from $0$ on its
support. This is not always the case, but it is true if there are
only finitely many different images (see Zweim\"uller (1998) for a neat
introduction to such classes of maps, or Broise (1996)).

\begin{defn}\label{UE}
A map $T:[0,1] \to [0,1]$ is uniformly expanding, mixing and with
density bounded from below if it satisfies the following properties:
\begin{enumerate}
\item There is a (finite or countable) partition of $T$ into
    subintervals $I_n$ on which $T$ is strictly monotonic, with a
    $C^2$ extension to its closure $\overline{I_n}$, satisfying
    Adler's condition $|T''|/|T'|^2 \leq C$, and with $|T'|\geq
    \lambda$ (where $C>0$ and $\lambda>1$ do not depend on
    $I_n$).
\item The length of $T(I_n)$ is bounded from below.
\item In this case, $T$ has finitely many absolutely continuous
    invariant measures, and each of them is mixing up to a finite
    cycle. We assume that $T$ has a single absolutely continuous
    invariant probability measure $\nu$, and that it is mixing.
\item Finally, we require that the density $h$ of $\nu$ is
    bounded from below on its support.
\end{enumerate}
\end{defn}
From this point on, we will simply refer to such maps as
\emph{uniformly expanding}. This definition encompasses for instance
piecewise $C^2$ maps with finitely many branches which are all onto,
and with derivative everywhere strictly larger than $1$ in absolute
values.

We consider now a class of expanding maps with a neutral fixed point at zero, 
as defined below.

\begin{defn}
A map $T:[0,1] \to [0,1]$ is a generalized Pomeau-Manneville
map (or GPM map) of parameter $\gamma \in (0,1)$ if there exist
$0=y_0<y_1<\dots<y_d=1$ such that, writing $I_k=(y_k,y_{k+1})$,
\begin{enumerate}
\item The restriction of $T$ to $I_k$ admits a $C^1$ extension
$T_{(k)}$ to $\overline{I_k}$.
\item For $k\geq 1$, $T_{(k)}$ is $C^2$ on $\overline{I_k}$, and $|T_{(k)}'|>1$.
\item $T_{(0)}$ is $C^2$ on $(0, y_1]$, with $T_{(0)}'(x)>1$ for $x\in
(0,y_1]$, $T_{(0)}'(0)=1$ and $T_{(0)}''(x) \sim c
x^{\gamma-1}$ when $x\to 0$, for some $c>0$.
\item $T$ is topologically transitive.
\end{enumerate}
\end{defn}

For such maps, almost sure invariance principles with good remainder
estimates (of the form $O(n^{1/2-\alpha})$ for some $\alpha>0$) have
been established by Melbourne and Nicol (2005) for H\"older
observables, and by Merlev\`ede and Rio (2012) under rather mild
integrability assumptions. For instance, for uniformly expanding
maps, Merlev\`ede and Rio (2012) obtain such a result for a class of
observables $f$ in ${\mathbb L}^p(\nu)$ for $p>2$. This leaves open
the question of the boundary case $f\in {\mathbb L}^2(\nu)$. In this
case, just like in the i.i.d.\ case, one can not hope for a remainder
$O(n^{1/2-\alpha})$ with $\alpha>0$, but one might expect to get
$o(\sqrt{n\ln\ln n})$. This would for instance be sufficient to
deduce the functional law of the iterated logarithm from the corresponding
result for the Brownian motion. The corresponding boundary case for
GPM maps has been studied in Dedecker, Gou\"ezel and Merlev\`ede (2010):
we proved a bounded law of the iterated logarithm (i.e., almost
surely, $\limsup \sum_{i=0}^{n-1} f\circ T^i/\sqrt{n\log\log n}\leq A
<+\infty$), but we were not able to obtain the almost sure invariance
principle.

Our goal in the present article is to solve this issue by combining
the arguments of the two above papers: we will approximate a function
in the boundary case by a function with better integrability
properties, use the almost sure invariance principle of Merlev\`ede and
Rio (2011) for this better function, and show that the bounded law of
the iterated logarithm makes it possible to pass the results from the
better function to the original function. This is an illustration of
a general method in mathematics: to prove results for a wide class of
systems, it is often sufficient to prove results for a smaller (but
dense) class of systems, and to prove uniform (maximal) inequalities.
This strategy gives the almost sure invariance principle in the
boundary case for GPM maps (see Theorem~\ref{ASmap2} below). In the case of uniformly
 expanding maps the almost sure invariance principle for a
dense set of functions has been proved by Hofbauer and Keller (1982)
for a smaller class than that given in Definition \ref{UE}, and
follows from Merlev\`ede and Rio (2012) for the class of uniformly
expanding maps considered in the present paper. However, the bounded law of
the iterated logarithm for the boundary case is not available in the
literature: we will prove it in Proposition~\ref{phi}.

\bigskip

We now turn to the functions for which we can prove the almost sure
invariance principle. The main feature of our arguments is that they
work with the weakest possible integrability condition (merely
${\mathbb L}^2(\nu)$
for uniformly expanding maps), and without any condition on the
modulus of continuity: we only need the functions to be piecewise
monotonic. More precisely, the results are mainly proved for
functions which are monotonic on a single interval, and they are then
extended by linearity to convex combinations of such functions. Such
classes are described in the following definition.

\begin{defn}
If $\mu$ is a probability measure on $\mathbb R$ and $p \in [2,
\infty)$, $M \in (0, \infty)$, let $\Mon_p(M,\mu)$ denote the set of
functions $f:\R\to \R$ which are monotonic on some interval and null
elsewhere and such that $\mu(|f|^p)\leq M^p$. Let $\Monm_p(M,\mu)$ be
the closure in ${\mathbb L}^1(\mu)$ of the set of functions which can
be written as $\sum_{\ell=1}^L a_\ell f_\ell$, where $\sum_{\ell=1}^L
|a_\ell| \leq 1$ and $f_\ell\in \Mon_p(M, \mu)$.
\end{defn}

The above definition deals with ${\mathbb L}^p$-like spaces, with an additional
monotonicity condition. In some cases, it is also important to deal
with spaces similar to weak ${\mathbb L}^p$, where one only requires a uniform
bound on the tails of the functions. Such spaces are described in the
following definition.

\begin{defn}
A function $H$ from ${\mathbb R}_+$ to $[0, 1]$ is a tail function if
it is non-increasing, right continuous, converges to zero at
infinity, and $x\mapsto x H(x)$ is integrable. If $\mu$ is a
probability measure on $\mathbb R$ and $H$ is a tail function, let
$\Mon(H, \mu)$ denote the set of functions $f:\R\to \R$ which are
monotonic on some interval and null elsewhere and such that
$\mu(|f|>t)\leq H(t)$. Let $\Monm(H, \mu)$ be the closure in
${\mathbb L}^1(\mu)$ of the set of functions which can be written as
$\sum_{\ell=1}^L a_\ell f_\ell$, where $\sum_{\ell=1}^L |a_\ell| \leq
1$ and $f_\ell\in \Mon(H, \mu)$.
\end{defn}

\bigskip

Our main theorems follow. For uniformly expanding maps, it involves
an ${\mathbb L}^2$-integrability condition, while for GPM maps the
boundary case is formulated in terms of tails.

\begin{thm} \label{ASmap1} Let $T$ be a uniformly expanding  map
with absolutely continuous invariant measure $\nu$. Then, for any
$M>0$ and any $f \in \Monm_2(M,\nu)$, the series
\begin{equation}\label{var}
\sigma^2=\sigma^2(f)= \nu((f-\nu(f))^2)+ 2 \sum_{k>0}
\nu ((f-\nu(f))f\circ T^k)
\end{equation}
converges absolutely to some nonnegative number.
Moreover,
\begin{enumerate}
\item
On the probability space $([0,1], \nu)$,
the process
\[
   \Big \{\frac{1}{\sqrt n} \sum_{i=0}^{[(n-1)t]} (f \circ T^i
  -\nu(f)), \ t \in [0,1]\Big \}
\]
converges in distribution in the Skorokhod topology to $\sigma W$,
where $W$ is a standard Wiener process.
\item There exists a nonnegative constant $A$ such that
\begin{equation*}
  \sum_{n=1}^\infty \frac{1}{n} \nu \Big( \max_{1 \leq k \leq
  n} \Big |\sum_{i=0}^{k-1} (f \circ T^i
  -\nu(f))\Big|\geq A \sqrt {n \log \log n)} \Big) < \infty \, .
\end{equation*}
\item Enlarging $([0,1], \nu)$ if necessary, there exists a
    sequence $(Z_i)_{i \geq 0}$ of i.i.d.\ Gaussian random
    variables with mean zero and variance $\sigma^2$ defined by
    \eqref{var}, such that
\begin{equation}\label{ASIP}
\Big|\sum_{i=0}^{n-1} (f \circ T^i
  -\nu(f)- Z_i) \Big| =o(\sqrt {n \log \log n})\, ,
  \text{almost surely.}
\end{equation}
\end{enumerate}
\end{thm}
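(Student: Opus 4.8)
The plan is to prove Theorem~\ref{ASmap1} by the approximation-plus-maximal-inequality strategy announced in the introduction, reducing the boundary case $f \in \Monm_2(M,\nu)$ to the better-integrability case covered by Merlev\`ede and Rio (2012). First I would record the structural facts that make this possible. The Perron--Frobenius operator of a uniformly expanding map has a spectral gap on the space of bounded variation functions, so for a function $g$ with enough integrability (say $g \in \Monm_p(M,\nu)$ for some $p>2$) the almost sure invariance principle with rate $O(n^{1/2-\alpha})$ holds and the variance series \eqref{var} converges; in particular conclusions (1), (2), (3) all hold for such $g$. The first real step is therefore to show that for a general $f \in \Monm_2(M,\nu)$ the series \eqref{var} still converges absolutely to a nonnegative limit, which I would obtain from covariance bounds of the form $|\nu((f-\nu(f)) \, f \circ T^k)| \lesssim \theta^k$ using the decay of correlations against $\BV$ functions together with the monotonicity of $f$ (monotonicity lets one control the $\BV$-norm of truncations of $f$, and squared integrability controls the truncation error).

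\medskip

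The heart of the argument is an approximation scheme. Given $f \in \Monm_2(M,\nu)$ and $\varepsilon>0$, I would split $f = g + r$ where $g$ is a truncation/smoothing lying in $\Monm_p(M',\nu)$ for some $p>2$ (for a monotone function on an interval, truncating at a level $N$ produces a bounded monotone function, hence a $\BV$ function with well-controlled norm, and the tail $r = f - g$ has small ${\mathbb L}^2(\nu)$ norm, controlled by $\nu(|f|^2 \I_{|f|>N})$). The good part $g$ satisfies the full ASIP \eqref{ASIP} with a genuinely small remainder by Merlev\`ede and Rio (2012), and its Gaussian variance $\sigma^2(g)$ converges to $\sigma^2(f)$ as $N \to \infty$ since the covariance series converges uniformly in the truncation level. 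To transfer the conclusion to $f$ itself, I would invoke the bounded law of the iterated logarithm for the remainder process: the point of Proposition~\ref{phi} (the analogue of which gives, in the present uniformly expanding setting, control of $\limsup_n \max_{k \le n} |\sum_{i=0}^{k-1}(r \circ T^i - \nu(r))| / \sqrt{n \log\log n}$ by a constant multiple of the ${\mathbb L}^2$-size of $r$). Choosing $N = N(\varepsilon)$ so that this ${\mathbb L}^2$-size is $<\varepsilon$ makes the remainder contribution $o(\sqrt{n \log\log n})$ up to an $\varepsilon$-factor, and a diagonal argument over a sequence $\varepsilon_j \to 0$ yields \eqref{ASIP} for $f$.

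\medskip

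For part (2), the maximal bounded LIL inequality, I would argue directly via a maximal inequality for partial sums combined with the same truncation: the good part contributes via its ASIP (which implies an LIL) and the bad part via the uniform maximal/LIL bound, the key being that the implied constant $A$ depends only on $M$ and not on the fine structure of $f$. Part (1), the functional CLT in the Skorokhod topology, follows from the ASIP of part (3) by the usual embedding/coupling argument (the partial-sum process is uniformly close to a rescaled Wiener process, whose scaling limit is $\sigma W$), or alternatively directly from convergence of finite-dimensional distributions plus tightness, where tightness again comes from the maximal inequality.

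\medskip

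The main obstacle, I expect, is establishing the uniform maximal inequality underlying the bounded law of the iterated logarithm for the remainder term $r$ \emph{with a constant depending only on its ${\mathbb L}^2(\nu)$-norm}, rather than on its $\BV$-norm or modulus of continuity — this is precisely what allows the passage from the dense subclass to the full class. Proving such an inequality requires combining the spectral-gap decay of correlations with a careful coboundary/martingale-approximation decomposition of $\sum r \circ T^i$ (writing $r$ near a solution of the Poisson equation, controlling the martingale part by Doob's inequality and the coboundary remainder by the monotonicity/truncation structure), and one must check that every estimate is genuinely quantitative in the ${\mathbb L}^2$-norm of $r$ uniformly over the monotone class $\Mon_2(M,\nu)$. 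Relative to this, the verification that truncations of monotone functions stay in a space of controlled bounded variation, and the convergence $\sigma^2(g) \to \sigma^2(f)$, are comparatively routine.
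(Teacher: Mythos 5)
Your overall strategy (approximate $f$ by a nicer function, apply Merlev\`ede--Rio to the nice part, control the remainder by a bounded LIL uniform over the class, then a diagonal argument) is exactly the strategy the paper announces and follows; however, two of your steps have genuine gaps. The central one is the approximation step itself: you propose to obtain $f=g+r$ by truncation at a level $N$, with $g$ of bounded variation and $r$ small. This works for a single monotone function, but an element of $\Monm_2(M,\nu)$ is an ${\mathbb L}^1(\nu)$-limit of convex combinations $\sum_\ell a_\ell g_\ell$ of monotone functions, and truncating each $g_\ell$ at a fixed level leaves tails $g_\ell \I_{|g_\ell|>N}$ whose ${\mathbb L}^2(\nu)$ norms are \emph{not} uniformly small in $\ell$: a monotone spike of ${\mathbb L}^2$ norm $M$ supported on an interval of tiny measure is annihilated by the truncation, so its tail is the whole function. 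The paper flags precisely this failure after Lemma~\ref{lem_approxL2}. Moreover, Proposition~\ref{phi} is not a bound in terms of the ${\mathbb L}^2$ norm of an arbitrary function: its constant $3CM$ refers to the class parameter of $\Monm_2(M,\cdot)$, and its proof uses monotonicity in an essential way (through Lemma~\ref{covaphi}, where the sets $\{f\le t\}$ must be intervals or complements of intervals). So you need the remainder $r$ to lie in $\Monm_2(\epsilon,\nu)$, not merely to have small ${\mathbb L}^2$ norm. Producing such a decomposition is the real content of Section~\ref{secapprox}: the structure theorem (Proposition~\ref{prop_structure}), stating that $\Monm_2(M,\nu)$ consists of the barycenters $\int g\,d\beta(g)$ of probability measures $\beta$ on the compact metrizable space $\Mon_2(M,\nu)$, permits a truncation level $K(g)$ chosen \emph{per function} $g$, and then a splitting into a bounded variation part, a part in $\Monm_2(\epsilon,\nu)$, and a part in $\Monm_2(\alpha(K)M,\nu)$ with $\alpha(K)\to 0$. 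This idea is absent from your proposal, and without it (or a substitute) the scheme does not get off the ground.

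The second gap concerns Item 1: it does \emph{not} follow from the ASIP of Item 3, because the error in \eqref{ASIP} is $o(\sqrt{n\log\log n})$, which is larger than $\sqrt{n}$; after dividing by $\sqrt n$ the coupling error is of order $\sqrt{\log\log n}$ and diverges, so no embedding argument can transfer the Wiener approximation to the Skorokhod limit. The paper instead proves Item 1 (together with the absolute convergence of the series \eqref{var}) independently: passing to the associated Markov chain $(Y_i)$ with kernel $K$, verifying Gordin's condition $\sum_n \| K^n(f)-\nu(f)\|_{{\mathbb L}^2(\nu)}<\infty$ via Lemma~\ref{covaphi} applied to approximants plus Fatou's lemma and the exponential decay $\phi_{2,{\bf Y}}(n)=O(\rho^n)$, and then invoking Heyde's theorem. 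Finally, a more minor point: your concluding "diagonal argument over $\varepsilon_j\to 0$" is Proposition~\ref{diag}, whose hypothesis 3 (independence, across approximation levels, of the coupled Gaussians along a subsequence of times) is what allows one to assemble a single i.i.d.\ sequence $(Z_i)$; this holds because of the block structure of the Merlev\`ede--Rio construction, and needs to be checked rather than assumed.
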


\begin{thm} \label{ASmap2} Let $T$ be a GPM map
with parameter $\gamma \in (0,1/2)$ and invariant measure $\nu$.
Let $H$ be a tail function with
\begin{equation}\label{lilcond}
\int_0^{\infty} x (H(x))^{\frac{1-2\gamma}{1-\gamma}} dx <\infty \,.
\end{equation}
Then, for any $f \in \Monm(H, \nu)$, the series $\sigma^2$ defined in
\eqref{var} converges absolutely to some nonnegative number, and the
asymptotic results 1., 2.\ and 3.\ of Theorem~\ref{ASmap1} hold.
\end{thm}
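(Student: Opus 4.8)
The proof implements the three-step programme announced in the introduction. The plan is to approximate $f$ by functions with good integrability, apply the polynomial-rate almost sure invariance principle of Merlev\`ede and Rio (2012) to those, and transfer the conclusion to $f$ by means of uniform maximal inequalities, the decisive one being the bounded law of the iterated logarithm of Dedecker, Gou\"ezel and Merlev\`ede (2010). Since the variance \eqref{var} is quadratic and the three conclusions are linear in the partial-sum process, I would first reduce to a single building block $f \in \Mon(H, \nu)$ --- monotonic on one interval, null elsewhere, with $\nu(|f| > t) \le H(t)$ --- handling finite convex combinations by linearity together with the subadditivity of the relevant maximal constants, and passing to the $\mathbb{L}^1(\nu)$-closure $\Monm(H,\nu)$ by the same maximal inequalities applied to small perturbations.

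For a building block I would use the truncation $f = f_N + \bar f_N$ with $f_N = (f \wedge N) \vee (-N)$. The bounded part $f_N$ satisfies the integrability hypotheses of Merlev\`ede and Rio (2012), which yield, after enlarging the space, i.i.d.\ centered Gaussians $(Z_i^{(N)})$ of variance $\sigma^2(f_N)$ with $\bigl|\sum_{i=0}^{n-1}(f_N\circ T^i - \nu(f_N) - Z_i^{(N)})\bigr| = O(n^{1/2-\alpha}) = o(\sqrt{n\log\log n})$. The tail part $\bar f_N$ is again monotonic and inherits the tail function $H_N(t)=H(t+N)$, which decreases to $0$ pointwise; because $\gamma < 1/2$, a covariance estimate against monotonic functions shows that \eqref{lilcond} makes the series \eqref{var} converge absolutely, that $\sigma^2(f_N) \to \sigma^2(f)$, and --- crucially --- that the maximal norms of the partial sums of $\bar f_N$ are small. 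Quantitatively, I would establish two uniform estimates whose constants vanish as $N \to \infty$ by dominated convergence from \eqref{lilcond}: an $\mathbb{L}^1$ maximal inequality $\nu\bigl(\max_{k \le n}|\sum_{i<k}(\bar f_N\circ T^i - \nu(\bar f_N))|\bigr) \le C_N \sqrt n$ at the Gaussian scale, and the bounded law of the iterated logarithm $\limsup_n (n\log\log n)^{-1/2}\max_{k\le n}|\sum_{i<k}(\bar f_N\circ T^i - \nu(\bar f_N))| \le A_N$ almost surely, with $C_N, A_N \to 0$.

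With these in hand the three conclusions are reached as follows. For conclusion 1 the partial-sum process of $f_N$ converges in the Skorokhod topology to $\sigma(f_N) W$, and the $\mathbb{L}^1$ maximal inequality (via Markov) makes the contribution of $\bar f_N$ uniformly negligible at scale $\sqrt n$; letting $N\to\infty$ and using $\sigma(f_N)\to\sigma(f)$ gives convergence to $\sigma(f) W$. Conclusion 2 is immediate from the bounded law of the iterated logarithm for $f_N$ (which holds a fortiori, even with a rate) together with the bound $A_N$ for $\bar f_N$. For the almost sure invariance principle (conclusion 3), combining the approximation for $f_N$ with the bounded law of the iterated logarithm for $\bar f_N$ approximates the $f$-process by $(Z_i^{(N)})$ with error at most $(A_N + o(1))\sqrt{n\log\log n}$.

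It remains to replace the $N$-dependent Gaussians by a single sequence $(Z_i)$ of variance $\sigma^2(f)$ on a common enlarged space. Since $\sigma^2(f_N) \to \sigma^2(f)$, I would couple so that $Z_i^{(N)} = (\sigma(f_N)/\sigma(f))\,Z_i$, the resulting discrepancy having partial sums $O(\sqrt{n\log\log n})$ with coefficient $|\sigma(f_N)/\sigma(f) - 1| \to 0$ by the law of the iterated logarithm for $(Z_i)$. A diagonal argument over a sequence $N_j \to \infty$ then forces the total error below $\varepsilon\sqrt{n\log\log n}$ for every $\varepsilon > 0$, which is \eqref{ASIP}. I expect the main difficulty to be precisely this gluing together with the quantitative dependence on \eqref{lilcond}: making the constants $C_N$ and $A_N$ vanish as $N\to\infty$ requires a sharp covariance and maximal analysis of monotonic observables adapted to the polynomial decay of correlations of GPM maps, and organizing all the level-$N$ approximations on one probability space with a uniformly controlled accumulated error.
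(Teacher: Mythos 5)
Your overall strategy (truncate, apply Merlev\`ede--Rio (2012) to the good part, control the remainder by the bounded law of the iterated logarithm of Dedecker, Gou\"ezel and Merlev\`ede (2010), then glue along a sequence of truncation levels) is indeed the paper's strategy, but two steps of your plan contain genuine gaps. First, the opening reduction to a single building block $f\in\Mon(H,\nu)$ ``by linearity'' is invalid: conclusions 1 and 3 are \emph{not} linear in $f$. If $f_1$ and $f_2$ each satisfy an ASIP with their own Gaussian sequences $(Z_i^{(1)})$ and $(Z_i^{(2)})$, nothing makes $(Z_i^{(1)}+Z_i^{(2)})$ an i.i.d.\ Gaussian sequence with variance $\sigma^2(f_1+f_2)$: the joint law of the two approximating sequences is completely uncontrolled, and $\sigma^2$ is quadratic, so cross-covariances do not cancel. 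The same objection applies to passing to the $\mathbb{L}^1$-closure. This is precisely why the paper never reduces to one monotonic piece: its decomposition (Lemma~\ref{lem_approxH}) truncates \emph{all} building blocks $g_{\ell,L}{\bf 1}_{|g_{\ell,L}|\leq m}$ simultaneously and invokes a compactness lemma for bounded variation functions (Lemma~\ref{compact}) to get a BV limit $\bar f_m$, while the remainder stays in the full class $\Monm(H_m,\nu)$, to which the bounded LIL of Dedecker, Gou\"ezel and Merlev\`ede (2010, Theorem 1.5) applies directly. Note also that Merlev\`ede--Rio requires bounded \emph{variation}, which your truncation $(f\wedge N)\vee(-N)$ only yields for a single monotonic piece, so your truncation is only meaningful because of the invalid reduction; and that conclusions 1 and 2 need not be re-proved at all (the paper quotes them from the 2010 paper --- in particular conclusion 2 is a summability-of-probabilities statement, which the almost sure bounds you propose do not imply).

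Second, your gluing step is circular. The coupling $Z_i^{(N)}=(\sigma(f_N)/\sigma(f))\,Z_i$ cannot simply be imposed: the sequences $(Z_i^{(N)})$ are delivered separately for each $N$ by an existence theorem, and there is no reason they should all be proportional to one common sequence. Nor can you keep the sequence produced for one fixed level $N_1$, since the error between the $f$-partial sums and $\sum_{i<n} Z_i^{(N_1)}$ is only of size $A_{N_1}\sqrt{n\log\log n}$, which does not tend to zero as $n\to\infty$. The paper's resolution (Proposition~\ref{diag}) is a block-diagonal construction: it uses the $m$-th Gaussian array only for times $i\in[A_m,A_{m+1})$, rescales within each block, and --- crucially --- requires its assumption 3, an independence property of the array across time ($\sigma(Z_{i,m})_{i<A,\,m}$ independent of $\sigma(Z_{i,m})_{i\geq A,\,m}$ for $A$ in an infinite set $\boA$), which holds for the specific block construction of Merlev\`ede--Rio with $\boA=\{2^L\}$. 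Without some such joint structural information relating the approximating sequences for different $m$, no diagonal argument can output a single i.i.d.\ sequence; this is exactly the ingredient your sketch identifies as ``the main difficulty'' but does not supply.
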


In particular, it follows from Theorem \ref{ASmap2} that, if $T$ is a
GPM map with parameter $\gamma \in (0,1/2)$, then the almost sure
invariance principle \eqref{ASIP} holds for any positive and
nonincreasing function $f$ on (0,1) such that
\[
  f(x) \leq \frac{C}{x^{(1-2 \gamma)/2}|\ln(x)|^b} \quad \text{near $0$,
  for some $b>1/2$.}
\]
Note that \eqref{ASIP} cannot be true if $f$ is exactly of the form
$f(x)=x^{-(1-2 \gamma)/2}$. Indeed, in that case, Gou\"ezel (2004)
proved that the central limit theorem holds with the normalization
$\sqrt{n \ln(n)}$, and the corresponding almost sure result is
\[
  \lim_{n \rightarrow 0} \frac{1}{\sqrt n (\ln (n))^b} \sum_{i=0}^{n-1} (f \circ T^i
  -\nu(f))= 0 \quad \text{almost everywhere, for any $b>1/2$.}
\]
We refer to the paper by Dedecker, Gou\"ezel and Merlev\`ede (2010) for a deeper
discussion on the optimality of the conditions.

The plan of the paper is as follows. In Section~\ref{secapprox}, we
explain how functions in $\Monm_p(M,\mu)$ or $\Monm(H, \mu)$ can be
approximated by bounded variation functions (to which the results of
Merlev\`ede and Rio (2012) regarding the almost sure invariance
principle apply). In Section~\ref{secasip}, we show how an almost
sure invariance principle for a sequence of approximating processes
implies an almost sure invariance principle for a given process, if
one also has uniform estimates (for instance, a bounded law of the
iterated logarithm). Those two results together with the bounded law
of the iterated logarithm of Dedecker, Gou\"ezel and Merlev\`ede (2010)
readily give the almost sure invariance principle in the boundary
case for GPM maps, as we explain in Section~\ref{secproofasmap2}. In
Section~\ref{secboundedlawphi}, we prove a bounded law of the
iterated logarithm under a polynomial assumption on mixing
coefficients, and we use this estimate in
Section~\ref{secproofasmap1} to obtain the almost sure invariance
principle in the boundary case for uniformly expanding maps,
following the same strategy as above.

\section{Approximation by bounded variation functions} \label{secapprox}
 Let us define the variation $\norm{f}_v$ of a function $f:\R\to \R$
as the supremum of the quantities $|f(a_0)|+\sum_{i=0}^{k-1}
|f(a_{i+1}-f(a_i)|+|f(a_k)|$ over all finite sequences $a_0<\dots
<a_k$. A function $f$ has bounded variation if $\norm{f}_v<\infty$.

In this section, we want to approximate a function in $
\Monm_2(M,\mu)$ or $\Monm(H,\mu)$ in a suitable way. For
$\Monm(H,\mu)$, we shall use the following compactness lemma. It is
mainly classical (compare for instance Hofbauer and Keller (1982)
Lemma 5), but since we have not been able to locate a reference with
this precise statement we will give a complete proof.
\begin{lma}\label{compact}
Let $\mu$ be a probability measure on $\R$. Let $f_n$ be a sequence
of functions on $\R$ with $\norm{f_n}_v \leq C$. Then there exists
$f:\R \to \R$ with $\norm{f}_v\leq C$ such that a subsequence
$f_{\varphi(n)}$ tends to $f$ in $\mathbb{L}^1(\mu)$.
\end{lma}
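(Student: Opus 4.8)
The plan is to prove this compactness result for bounded variation functions using a Helly-type selection argument, which is the classical route for such statements. The obstacle is that bounded variation does not immediately control the pointwise values of $f_n$ (a function and its translate by a large constant have the same variation), so the functions $f_n$ might not be uniformly bounded, and I cannot directly extract a pointwise convergent subsequence from the whole family. Let me sketch the main steps.

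First I would recall the structure theorem for BV functions: any function $f$ with $\norm{f}_v\le C$ can be written as a difference of two nondecreasing functions, each of bounded oscillation, and in particular $f$ has left and right limits everywhere. The quantity $\norm{f}_v$ as defined (with the boundary terms $|f(a_0)|$ and $|f(a_k)|$) actually controls the sup norm: taking a single point shows $|f(a)|\le\norm{f}_v\le C$ for all $a$, so the family $(f_n)$ is in fact uniformly bounded by $C$. This is the key point that rescues the argument and is built into the particular definition of variation used here.

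With uniform boundedness and uniformly bounded variation in hand, I would apply Helly's selection theorem. Concretely, decompose each $f_n$ as $g_n - h_n$ with $g_n, h_n$ nondecreasing and uniformly bounded (the monotone parts of $f_n$, normalized appropriately). By Helly's theorem there is a subsequence along which both $g_n$ and $h_n$ converge pointwise (at every real number, using a diagonal argument over the rationals followed by the monotonicity to handle all points) to nondecreasing limits $g,h$; set $f=g-h$. Lower semicontinuity of total variation under pointwise convergence then gives $\norm{f}_v\le C$. So along this subsequence $f_{\varphi(n)}\to f$ pointwise with a uniform bound $|f_{\varphi(n)}|\le C$.

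Finally I would upgrade pointwise convergence to $\mathbb{L}^1(\mu)$ convergence. Since $|f_{\varphi(n)}-f|\le 2C$ uniformly and $\mu$ is a probability measure, the constant $2C$ is $\mu$-integrable, so the dominated convergence theorem yields $\int|f_{\varphi(n)}-f|\dd\mu\to 0$, i.e.\ $f_{\varphi(n)}\to f$ in $\mathbb{L}^1(\mu)$. The only subtle point to handle carefully is that pointwise convergence along the subsequence holds at \emph{every} point rather than merely $\mu$-almost everywhere; this follows from the monotone structure and Helly's theorem, so dominated convergence applies directly without needing to discard a null set. The main obstacle throughout is the one already noted, namely extracting a uniform bound, which here is immediate from the boundary terms in the definition of $\norm{\cdot}_v$; once that is secured the rest is the standard Helly plus dominated convergence package.
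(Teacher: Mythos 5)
Your proof is correct, and it takes a genuinely different route from the paper's. Both arguments hinge on the same crucial feature of the definition of $\norm{\cdot}_v$ used here: the boundary terms $|f(a_0)|$, $|f(a_k)|$ force uniform boundedness, $|f_n(a)|\leq\norm{f_n}_v\leq C$ for every $a$. From there you follow the classical path: Jordan decomposition into uniformly bounded monotone parts, Helly's selection theorem to get a subsequence converging pointwise \emph{everywhere} on $\R$ to a limit $f$, lower semicontinuity of $\norm{\cdot}_v$ under pointwise convergence (each finite-sequence quantity passes to the limit, then take the supremum) to get $\norm{f}_v\leq C$, and dominated convergence (with the constant $2C$ as dominating function) for the $\mathbb{L}^1(\mu)$ convergence. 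The paper instead avoids Helly entirely: it chooses finitely many points $a_0<\dots<a_k$ splitting $\R$ into intervals of $\mu$-measure at most $\epsilon$, extracts a subsequence whose values converge at these points, and proves the subsequence is Cauchy in $\mathbb{L}^1(\mu)$ via the estimate that any $g$ with $\norm{g}_v\leq 2C$ and $|g(a_i)|\leq\epsilon$ satisfies $\norm{g}_{\mathbb{L}^1(\mu)}\leq D\epsilon$; a diagonal argument gives an $\mathbb{L}^1(\mu)$-limit, and the bound $\norm{f}_v\leq C$ then requires a separate, somewhat delicate construction of a good version of $f$ outside the almost-sure convergence set (in particular outside $\overline{\Omega}$, where $\mu$ may carry no mass). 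Your approach buys a stronger conclusion (pointwise convergence everywhere), a shorter argument modulo the standard citation, and no need for that extension step, since Helly hands you a globally defined limit; the paper's approach buys self-containedness, which the authors explicitly wanted since they could not locate a reference for this precise statement. One point to make airtight in your write-up: getting convergence at \emph{every} point from Helly requires, after diagonalizing over the rationals, a further extraction at the countably many discontinuity points of the monotone limits; your parenthetical gestures at this, and it is standard, but it deserves a sentence.
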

\begin{proof}
We will first prove that $f_n$ admits a convergent subsequence in
$\mathbb{L}^1(\mu)$. By a classical diagonal argument, it suffices to
show that, for any $\epsilon>0$, one can find a subsequence with
$\limsup_{n \rightarrow \infty} \sup_{m \geq n} \norm{f_{\varphi(n)}- f_{\varphi(m)}}_{\mathbb{L}^1(\mu)}
\leq D\epsilon$, for some $D>0$ not dependending on $\epsilon$.

We consider a finite number of points $a_0<\dots <a_k$ such that
(letting $a_{-1}=-\infty$ and $a_{n+1}=+\infty$), the measure of
every interval $(a_i, a_{i+1})$ is at most $\epsilon$. One can find a
subsequence of $f_n$ such that each $f_{\varphi(n)}(a_i)$ converges,
we claim that it satisfies the desired property. It suffices to show
that a function $g$ with $|g(a_i)|\leq \epsilon$ for all $i$ and
$\norm{g}_v\leq 2C$ satisfies
  \begin{equation}
  \label{pwiu<wcvxlkj}
  \norm{g}_{\mathbb{L}^1(\mu)} \leq D\epsilon.
  \end{equation}
Consider in each interval $(a_i, a_{i+1})$ a point $b_i$ such that
$\sup_{(a_i, a_{i+1})} |g| \leq 2|g(b_i)|$. We have
  \begin{align*}
  \norm{g}_{\mathbb{L}^1(\mu)}
  &
  \leq \sum \mu(a_i, a_{i+1}) \sup_{(a_i, a_{i+1})} |g|
  + \sum \mu\{a_i\} |g(a_i)|
  \\& \leq
  2\sum \mu(a_i, a_{i+1}) (|g(b_i) - g(a_i)| + |g(a_i)|)
  + \sum \mu\{a_i\} |g(a_i)|.
  \end{align*}
Since $|g(a_i)|\leq \epsilon$ and $\mu$ is a probability measure, the
contribution of the terms $|g(a_i)|$ to this expression is at most
$2\epsilon$. Moreover, $\sum \mu(a_i, a_{i+1}) |g(b_i) - g(a_i)| \leq
\epsilon \sum |g(b_i) - g(a_i)| \leq \epsilon\norm{g}_v$. This proves
\eqref{pwiu<wcvxlkj}.

We have proved that $f_n$ admits a subsequence (that we still denote
$f_n$) that converges in $\mathbb{L}^1(\mu)$ to a function $f$.
Extracting further if necessary, we may also assume that it converges
to $f$ on a set $\Omega$ with full measure. On $\overline{\Omega}
-\Omega$, we define $f(x)$ to be $\limsup f(y)$ where $y$ tends to
$x$ in $\Omega$. Finally, on the open set $\R-\overline{\Omega}$
(which may be nonempty if $\mu$ does not have full support), we
define $f(x)$ to be $\max(f(a), f(b))$ where $a$ and $b$ are the
endpoints of the connected component of $x$ in $\R-\overline{\Omega}$
(if one of those endpoints is $-\infty$ or $+\infty$, we only use the
other endpoint). Then $f_n$ converges to $f$ in $\mathbb{L}^1(\mu)$,
and we claim that $f$ has variation at most $C$.

Indeed, consider a sequence $a_0<\dots <a_k$, we want to estimate
$|f(a_0)|+\sum |f(a_{i+1})-f(a_i)|+|f(a_k)|$. Let $b_i=a_i$ if
$a_i\in \Omega$. By construction of $f$, for all $a_i\not\in \Omega$,
one may find a point $b_i$ in $\Omega$ such that $|f(a_i)-f(b_i)|$ is
small, say $<\epsilon/(k+1)$, and we may ensure that $b_0 \leq \dots
\leq b_k$. Then
  \begin{multline*}
  |f(a_0)|+\sum |f(a_{i+1})-f(a_i)|+|f(a_n)|
  \leq 4\epsilon + |f(b_0)|+\sum |f(b_{i+1})-f(b_i)|+|f(b_k)|
  \\
  =4\epsilon + \lim \Bigl(|f_n(b_0)|+\sum |f_n(b_{i+1})-f_n(b_i)|+|f_n(b_k)|\Bigr).
  \end{multline*}
Since the variation of $f_n$ is at most $C$, this is bounded by
$4\epsilon+C$. Letting $\epsilon$ tend to $0$, we get $\norm{f}_v\leq
C$.
\end{proof}

\begin{lma}
\label{lem_approxH}
Let $H$ be a tail function, and consider $f\in \Monm(H,\mu)$. For any
$m>0$, one can write $f=\bar f_m + g_m$ where $\bar f_m$ has bounded variation
and $g_m \in \Monm(H_m, \mu)$ where
$H_m(x)=\min(H(m), H(x))$.
\end{lma}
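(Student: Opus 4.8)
The plan is to reduce to a single monotonic function by truncating at height $m$, handle finite linear combinations using the constraint $\sum_\ell|a_\ell|\leq 1$, and then pass to the general case by a compactness argument based on Lemma~\ref{compact}. First I would record that $H_m(x)=\min(H(m),H(x))$ is again a tail function (it is non-increasing, right continuous, tends to $0$, and $xH_m(x)\leq xH(x)$ is integrable), so that $\Monm(H_m,\mu)$ is well defined and closed in $\mathbb{L}^1(\mu)$.

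For a single $f\in\Mon(H,\mu)$, monotonic on an interval $I$ and null outside, I would set $\bar f_m=(f\wedge m)\vee(-m)$ and $g_m=f-\bar f_m$. The truncation $\bar f_m$ is monotonic on $I$, null outside, and bounded by $m$, so a direct computation with the definition of the variation gives $\norm{\bar f_m}_v\leq 4m$; in particular $\bar f_m$ has bounded variation. The remainder satisfies $|g_m(x)|=(|f(x)|-m)_+$, so $g_m$ is again monotonic on $I$ and null elsewhere, and since $H$ is non-increasing, $\mu(|g_m|>t)=\mu(|f|>m+t)\leq H(m+t)\leq\min(H(m),H(t))=H_m(t)$. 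Hence $g_m\in\Mon(H_m,\mu)$. For a finite combination $f=\sum_{\ell=1}^L a_\ell f_\ell$ with $\sum_\ell|a_\ell|\leq 1$ and $f_\ell\in\Mon(H,\mu)$, I would decompose each term and set $\bar f_m=\sum_\ell a_\ell\bar f_{\ell,m}$ and $g_m=\sum_\ell a_\ell g_{\ell,m}$. Then $\norm{\bar f_m}_v\leq\sum_\ell|a_\ell|\cdot 4m\leq 4m$, while $g_m$ is exactly of the form defining $\Monm(H_m,\mu)$, so $g_m\in\Monm(H_m,\mu)$.

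For general $f\in\Monm(H,\mu)$, I would choose combinations $f^{(j)}$ of the above type with $f^{(j)}\to f$ in $\mathbb{L}^1(\mu)$ and decompose each as $f^{(j)}=\bar f_m^{(j)}+g_m^{(j)}$, with the uniform bound $\norm{\bar f_m^{(j)}}_v\leq 4m$. The main obstacle is that these decompositions are not canonical—they depend on the chosen representation of $f^{(j)}$ as a linear combination—so one cannot simply take termwise $\mathbb{L}^1$-limits of the two pieces. This is precisely where the uniform variation bound is used: by Lemma~\ref{compact} there is a subsequence with $\bar f_m^{(\varphi(j))}\to\bar f_m$ in $\mathbb{L}^1(\mu)$ for some $\bar f_m$ with $\norm{\bar f_m}_v\leq 4m$. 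Consequently $g_m^{(\varphi(j))}=f^{(\varphi(j))}-\bar f_m^{(\varphi(j))}\to f-\bar f_m=:g_m$ in $\mathbb{L}^1(\mu)$, and since $\Monm(H_m,\mu)$ is closed in $\mathbb{L}^1(\mu)$ we conclude $g_m\in\Monm(H_m,\mu)$. The resulting splitting $f=\bar f_m+g_m$ is then the desired decomposition.
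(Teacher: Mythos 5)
Your proof is correct and takes essentially the same route as the paper: truncate the components of the approximating finite combinations at level $m$ to obtain a uniform variation bound, extract an $\mathbb{L}^1(\mu)$-convergent subsequence of the bounded variation parts via Lemma~\ref{compact}, and conclude that the remainder lies in $\Monm(H_m,\mu)$ because that class is closed in $\mathbb{L}^1(\mu)$. The only (immaterial) difference is that the paper truncates each component by $g\,\mathbf{1}_{|g|\leq m}$ rather than capping at $\pm m$; both choices yield the same uniform variation bound and the same tail estimate $\mu(|g_m|>t)\leq H_m(t)$.
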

\begin{proof}
Consider $f\in \Monm(H,\mu)$. By definition, there exists a sequence
of functions $f_L= \sum_{\ell=1}^L a_{\ell, L} g_{\ell, L}$ with
$g_{\ell, L}$ belonging to $\Mon(H, \mu)$ and $\sum_{\ell=1}^L
|a_{\ell, L}| \leq 1$, such that $f_L$ converges in ${\mathbb
L}^1(\mu)$ to $f$. Define then
\[
f_{L,m}=\sum_{\ell=1}^L a_{\ell, L} g_{\ell, L}{\bf 1}_{|g_{\ell, L}| \leq m} \, .
\]
Note that $f_{L,m}$ is such that $\|f_{L,m}\|_v \leq 3m$. Applying Lemma
\ref{compact},
there exists a
subsequence $ f_{\varphi(L), m}$ converging in ${\mathbb L}^1(\mu)$
to a limit $\bar f_m$ such that $\|\bar f_m\|_v \leq 3m$.
Hence $f-\bar f_m$ is the limit in ${\mathbb L}^1(\mu)$ of
\[
f_{\varphi(L)}-f_{\varphi(L), m}=
\sum_{\ell=1}^{\varphi(L)} a_{\ell, \varphi(L)}
g_{\ell, \varphi(L)}{\bf 1}_{|g_{\ell, \varphi(L)}| > m} \, .
\]
Now $g_{\ell, \varphi(L)}{\bf 1}_{|g_{\ell, \varphi(L)}| > m}$
belongs to  $\Mon(\min(H(m), H),\mu)$.
It follows that $f-\bar f_m$ belongs to the class $\Monm(H_m,\mu)$.
\end{proof}

A similar result holds for the space $\Monm_2(M,\mu)$:
\begin{lma}
\label{lem_approxL2}
Consider $f\in \Monm_2(M,\mu)$. For any $m>0$, one can write $f=\bar
f_m + g_m$, where $\bar f_m$ has bounded variation and $g_m \in
\Monm_2(1/m, \mu)$.
\end{lma}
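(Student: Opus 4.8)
The plan is to adapt the proof of Lemma~\ref{lem_approxH}, replacing the tail-function bookkeeping by an $\mathbb{L}^2$-norm control. The conceptual structure should be identical: take a representation of $f$ as an $\mathbb{L}^1$-limit of convex combinations of monotonic functions, truncate each summand at level $m$, extract a bounded-variation limit of the truncated parts using Lemma~\ref{compact}, and identify the remainder as the limit of the high-level parts. The substance of the argument lies in verifying that the remainder lands in $\Monm_2(1/m,\mu)$, which is where the quantitative $\mathbb{L}^2$ gain $M \mapsto 1/m$ has to come from.

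First I would write $f$ as the $\mathbb{L}^1(\mu)$-limit of $f_L = \sum_{\ell=1}^L a_{\ell,L} g_{\ell,L}$ with $\sum_\ell |a_{\ell,L}| \le 1$ and each $g_{\ell,L}\in \Mon_2(M,\mu)$. I would set $f_{L,m} = \sum_\ell a_{\ell,L} g_{\ell,L} \mathbf{1}_{|g_{\ell,L}|\le m}$; since each truncated summand is a monotonic function bounded by $m$, it has variation at most $3m$, so $\|f_{L,m}\|_v \le 3m$ and Lemma~\ref{compact} yields a subsequence $f_{\varphi(L),m}$ converging in $\mathbb{L}^1(\mu)$ to some $\bar f_m$ with $\|\bar f_m\|_v \le 3m$. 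Then $f - \bar f_m$ is the $\mathbb{L}^1(\mu)$-limit of the tail pieces $g_m := \sum_\ell a_{\ell,\varphi(L)} g_{\ell,\varphi(L)} \mathbf{1}_{|g_{\ell,\varphi(L)}|>m}$.

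The step that requires real care — and the main obstacle — is showing that each summand $g_{\ell}\mathbf{1}_{|g_\ell|>m}$ lies in $\Mon_2(1/m,\mu)$, i.e.\ that $\mu(|g_\ell \mathbf{1}_{|g_\ell|>m}|^2) \le (1/m)^2$. This is a Chebyshev-type estimate: writing $g=g_\ell$, one has $g^2 \mathbf{1}_{|g|>m} = g^2 \mathbf{1}_{g^2 > m^2} \le (g^2)^2/m^2 = g^4/m^2$ only under a fourth-moment bound, which is \emph{not} available. The correct bound instead uses $\mu(g^2 \mathbf{1}_{|g|>m}) \le m^{-2}\,\mu(g^2\cdot g^2 \mathbf{1}_{|g|>m})$, which again overshoots; so the clean route is to observe that the definition of $\Mon_2$ only requires the $\mathbb{L}^2$-bound on the truncated function. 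Concretely, $\mu(g^2 \mathbf{1}_{|g|>m}) \le \mu(g^2 \cdot (|g|/m)^{?})$ must be replaced by the estimate $\mu(g^2\mathbf{1}_{|g|>m}) \le m^{-2}\mu(g^4\mathbf{1}_{|g|>m})$, which is not what we want. I would therefore normalize differently: since $\mu(g^2)\le M^2$ gives no control of the \emph{square root} of the truncated $\mathbb{L}^2$-norm, the honest statement to target is $\mu((g\mathbf{1}_{|g|>m})^2)\to 0$ as $m\to\infty$ uniformly, and the claimed value $1/m$ should be read as a (possibly rescaled) constant tending to $0$; I expect the paper's constant $1/m$ to be exactly such a rescaling of the level, so I would choose the truncation threshold as a function of $m$ (say at level $Mm$ or similar) so that $\mu((g\mathbf{1}_{|g|>\text{level}})^2)\le 1/m^2$ holds by monotone convergence together with $\mu(g^2)<\infty$.

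Finally, I would assemble the pieces by linearity: since $\sum_\ell |a_{\ell,\varphi(L)}|\le 1$ and each high-level summand lies in $\Mon_2(1/m,\mu)$, the convex combination $g_m$ is an $\mathbb{L}^1(\mu)$-limit of elements of the defining class for $\Monm_2(1/m,\mu)$, hence $g_m = f - \bar f_m \in \Monm_2(1/m,\mu)$, completing the decomposition $f = \bar f_m + g_m$. The only delicate point beyond the routine mimicking of Lemma~\ref{lem_approxH} is the uniform $\mathbb{L}^2$ smallness of the truncated tails, which is exactly the place where the $\mathbb{L}^2$-integrability hypothesis (rather than a mere tail bound) is used, so I would state that quantitative estimate carefully before invoking the closure definition.
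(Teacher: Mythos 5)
Your proposal correctly locates the obstruction---a fixed truncation level gives no uniform $\mathbb{L}^2$ gain---but the fix you propose does not overcome it, and this is precisely where the argument breaks down. You suggest choosing the truncation threshold as a function of $m$ alone (``say at level $Mm$ or similar'') so that $\mu\bigl((g\mathbf{1}_{|g|>\mathrm{level}})^2\bigr)\le 1/m^2$ holds ``by monotone convergence together with $\mu(g^2)<\infty$''. Monotone convergence gives this for each \emph{fixed} $g$, but not uniformly over $g\in\Mon_2(M,\mu)$, and uniformity is exactly what you need, since the functions $g_{\ell,L}$ appearing in the approximating sums of $f$ range over the whole class. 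The class is not uniformly square-integrable: for any threshold $t$, a tall thin spike $g=c\,\mathbf{1}_I$ with $c>t$ and $c^2\mu(I)=M^2$ lies in $\Mon_2(M,\mu)$ and satisfies $g\mathbf{1}_{|g|>t}=g$, so its tail part has $\mathbb{L}^2(\mu)$-norm exactly $M$, not $o(1)$. Thus no truncation level depending only on $m$ can work, and your final paragraph (which assembles the decomposition assuming each tail summand lies in $\Mon_2(1/m,\mu)$) rests on a false premise. This is exactly the failure mode the paper points out immediately after the statement of the lemma.

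The paper's actual proof requires two ingredients that your proposal never reaches. First, a structure theorem (Proposition~\ref{prop_structure}, which itself rests on the compactness of $\Mon_2(M,\mu)$ in the weak topology): every $f\in\Monm_2(M,\mu)$ can be written as $f=\int g\,d\beta(g)$ for some probability measure $\beta$ on $\Mon_2(M,\mu)$. Second, a truncation level that depends on $g$: one sets $K(g)$ to be the smallest number with $\int g^2\mathbf{1}_{|g|\ge K(g)}\,d\mu\le\epsilon^2$, and splits $f$ into three pieces according to whether $K(g)<K$ or $K(g)\ge K$. The piece $\int_{K(g)<K} g\mathbf{1}_{|g|\le K(g)}\,d\beta(g)$ has variation at most $3K$; the piece $\int_{K(g)<K} g\mathbf{1}_{|g|>K(g)}\,d\beta(g)$ lies in $\Monm_2(\epsilon,\mu)$ by the very choice of $K(g)$; and the piece $\int_{K(g)\ge K} g\,d\beta(g)$ lies in $\Monm_2(\alpha(K)M,\mu)$, where $\alpha(K)=\beta\{g\tq K(g)\ge K\}\to 0$ as $K\to\infty$. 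It is this last step---controlling the set of functions whose individual truncation level is large by the $\beta$-measure of that set---that replaces the uniform integrability your argument implicitly (and incorrectly) assumes; without the integral representation over the compact space $\Mon_2(M,\mu)$, there is no way to make this control uniform over the approximating sums $f_L$.
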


The above proof does not work to obtain this result (the problem is
that the function $g_{\ell} {\bf 1}_{|g_\ell|>m}$ usually does not
satisfy better ${\mathbb L}^2$ bounds than the function $g_\ell$, at
least not uniformly in $g_\ell$). To prove this lemma, we will
therefore need to understand more precisely the structure of elements
of $\Monm_2(M,\mu)$. We will show that they are extended convex
combinations of elements of $\Mon_2(M,\mu)$, i.e., they can be
written as $\int g d\beta(g)$ for some probability measure $\beta$ on
$\Mon_2(M,\mu)$ (the case $\sum a_\ell g_\ell$ corresponds to the
case where $\beta$ is an atomic measure).

To justify this assertion, the first step is to be able to speak of
measures on $\Mon_2(M,\mu)$. We need to specify a topology on
$\Mon_2(M,\mu)$. We use the weak topology (inherited from the space
${\mathbb L}^2(\mu)$, that contains $\Mon_2(M,\mu)$): a sequence $f_n\in
\Mon_2(M,\mu)$ converges to $f$ if, for any continuous compactly
supported function $u:\R\to\R$ (or, equivalently, for any ${\mathbb L}^2(\mu)$
function $u$), $\int f_n(x) u(x) d\mu(x) \to \int f(x) u(x) d\mu(x)$.
\begin{lma}
The space $\Mon_2(M,\mu)$, with the topology of weak convergence, is
a compact metrizable space.
\end{lma}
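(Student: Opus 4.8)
The plan is to realize $\Mon_2(M,\mu)$ as a weakly sequentially closed subset of the closed ball $B_M=\{g\tq \mu(|g|^2)\leq M^2\}$ of the separable Hilbert space $\mathbb{L}^2(\mu)$, and then to invoke soft functional analysis for both topological conclusions. For metrizability, since $\mu$ is a Borel probability measure the space $\mathbb{L}^2(\mu)$ is separable, and on a bounded subset of a separable Hilbert space the weak topology is metrizable; as $\Mon_2(M,\mu)\subset B_M$, the inherited weak topology on it is metrizable. For compactness, $B_M$ is weakly compact by reflexivity of $\mathbb{L}^2(\mu)$ (equivalently Banach--Alaoglu), and since the weak topology on $B_M$ is metrizable it suffices to show that $\Mon_2(M,\mu)$ is sequentially closed in $B_M$. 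Thus, given $f_n\in\Mon_2(M,\mu)$, weak compactness of $B_M$ furnishes a subsequence converging weakly to some $f$ with $\mu(|f|^2)\leq M^2$ (weak lower semicontinuity of the norm), and the entire content is to check that $f$ again belongs to $\Mon_2(M,\mu)$.

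The heart of the matter is therefore the following: if $f_n\to f$ weakly in $\mathbb{L}^2(\mu)$ with each $f_n$ monotonic on an interval and null elsewhere, then $f$ admits a representative with the same structure. First I would pass to a subsequence on which the monotonicity type is constant, say each $f_n$ nondecreasing on $(a_n,b_n)$ and null on the complement, and then to a further subsequence on which $a_n\to a$ and $b_n\to b$ in $[-\infty,+\infty]$, with $a\leq b$. The key observation is that monotonicity survives in an averaged form: since bounded indicators lie in $\mathbb{L}^2(\mu)$, one has $\int_A f_n\dd\mu\to\int_A f\dd\mu$ for every Borel set $A$. If $I$ and $J$ are nondegenerate compact intervals with $I$ entirely to the left of $J$, both contained in $(a,b)$ and of positive $\mu$-measure, then for $n$ large $I,J\subset(a_n,b_n)$ and the monotonicity of $f_n$ gives $\mu(I)^{-1}\int_I f_n\dd\mu\leq \sup_I f_n\leq \inf_J f_n\leq \mu(J)^{-1}\int_J f_n\dd\mu$; passing to the limit yields
\[
  \frac{1}{\mu(I)}\int_I f\dd\mu \;\leq\; \frac{1}{\mu(J)}\int_J f\dd\mu.
\]

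From this averaged monotonicity I would recover genuine monotonicity of a representative of $f$ via the Lebesgue--Besicovitch differentiation theorem for the Radon measure $\mu$: for $\mu$-almost every $x$, the averages $\mu((x-r,x+r))^{-1}\int_{(x-r,x+r)}f\dd\mu$ converge to $f(x)$ as $r\to0$. Applying the displayed inequality to two shrinking balls around $\mu$-generic points $x<y$ of $(a,b)$ shows $f(x)\leq f(y)$, so on a full-$\mu$-measure subset of $(a,b)$ the function $f$ is nondecreasing, and we take the associated nondecreasing function as its representative there. Finally, for any compact $K\subset(-\infty,a)\cup(b,+\infty)$ we have $K\cap(a_n,b_n)=\emptyset$ for $n$ large, so $\int_K f\dd\mu=\lim\int_K f_n\dd\mu=0$, whence $f=0$ $\mu$-a.e.\ outside $[a,b]$. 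Together with $\mu(|f|^2)\leq M^2$ this gives $f\in\Mon_2(M,\mu)$. The main obstacle is exactly this last passage --- extracting a pointwise monotonic representative from a merely weak limit --- and the device that makes it work is to encode monotonicity as the order relation between $\mu$-averages, which is stable under weak convergence, and then to differentiate it back out against $\mu$. Minor care is needed when $a$ or $b$ is infinite, or when $a=b$ (which forces $f=0$), but these degenerate cases are covered by the same estimates.
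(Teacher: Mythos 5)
Your proof is correct, and its key device is genuinely different from the paper's. Both arguments agree on the soft part (metrizability of the weak topology on a bounded set of the separable space $\mathbb{L}^2(\mu)$, weak compactness of the ball), and both reduce the lemma to the same hard point: a weak limit $f$ of functions monotonic on intervals and null elsewhere has a representative of the same form. The paper settles this with the Banach--Saks theorem: after fixing the combinatorial type and the endpoint limits $a,b$, it passes to Ces\`aro means $\frac1N\sum_{n\leq N}f_n$, which converge to $f$ in $\mathbb{L}^2(\mu)$ and $\mu$-almost everywhere, and then reads off monotonicity and vanishing pointwise on the full-measure set of convergence; atoms are handled automatically, since any full-measure set contains them. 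You instead encode monotonicity as an order relation between $\mu$-averages over disjoint intervals --- stable under weak convergence because indicators lie in $\mathbb{L}^2(\mu)$ --- and recover pointwise monotonicity of a representative through the Besicovitch differentiation theorem for the Radon measure $\mu$. This avoids Banach--Saks and any almost-everywhere convergence, at the price of invoking a heavier measure-theoretic theorem and of weaker control at the endpoints: as written, your argument pins $f$ down only on $(a,b)$ and outside $[a,b]$, which is not enough when $\mu$ has an atom at $a$ or $b$. The repair uses exactly your tools: testing against $\I_{\{a\}}\in\mathbb{L}^2(\mu)$ gives $f_n(a)\to f(a)$, and along a subsequence either $a\notin(a_n,b_n)$ always, forcing $f(a)=0$, or $a\in(a_n,b_n)$ always, in which case the same averaging estimate yields $f(a)\leq f(y)$ for $\mu$-almost every $y\in(a,b)$, so the representative is monotonic on $[a,b)$; similarly at $b$. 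Relatedly, your parenthetical claim that $a=b$ forces $f=0$ fails when $\mu(\{a\})>0$ (take $f_n=\I_{[a,a+1/n)}$, monotonic on $(a-1/n,a+1/n)$ and null elsewhere, whose weak limit is $\I_{\{a\}}$); the lemma survives because $\I_{\{a\}}$ is monotonic on the degenerate interval $[a,a]$ and null elsewhere, hence still in $\Mon_2(M,\mu)$. With these small additions your proof is complete.
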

\begin{proof}
Consider a countable sequence of continuous compactly supported
functions $u_k:\R \to \R$, which is dense in this space for the
topology of uniform convergence. We define a distance on ${\mathbb L}^2(\mu)$
by
  \begin{equation*}
  d(f_1,f_2)= \sum 2^{-k}\min\left(1,\left|\int (f_1-f_2) u_k d\mu \right|\right).
  \end{equation*}
Convergence for this distance is clearly equivalent to weak
convergence.

Let us now prove that $\Mon_2(M,\mu)$ is compact. Consider a sequence
$f_n$ in this space. In particular, it is bounded in ${\mathbb
L}^2(\mu)$. By weak compactness of the unit ball of a Hilbert space,
we can find a subsequence (still denoted by $f_n$) which converges
weakly in ${\mathbb L}^2(\mu)$, to a function $f$. In particular,
$\int f_n u d\mu$ converges to $\int fud\mu$ for any continuous
compactly supported function $u$. Moreover, $f$ is bounded by $M$ in
${\mathbb L}^2(\mu)$. To conclude, it remains to show that $f$ has a
version which is monotonic on an interval, and vanishes elsewhere.

A function in $\Mon_2(M,\mu)$ can be either nonincreasing or
nondecreasing, on an interval which is half-open or half-closed to
the left and to the right, there are therefore eight possible
combinatorial types. Extracting a further subsequence if necessary,
we may assume that all the functions $f_n$ have the same
combinatorial type. For simplicity, we will describe what happens for
one of those types, the other ones are handled similarly. We will
assume that all the functions $f_n$ are nondecreasing on an interval
$(a_n, b_n]$. We may also assume that $a_n$ and $b_n$ are either
constant, or increasing, or decreasing (since any sequence in
$\overline{\R}=\R\cup\{\pm \infty\}$ admits a subsequence with this
property). In particular, those sequences converge in $\overline{\R}$
to limits $a$ and $b$. Let $I$ be the interval with endpoints $a$ and
$b$, where we include $a$ in $I$ if $a_n$ is increasing and exclude
it otherwise, and where we include $b$ if $b_n$ is decreasing or
constant and exclude it otherwise. The Banach-Saks theorem shows that
(extracting further if necessary) we may ensure that the sequence of
functions $g_N=\frac{1}{N}\sum_{n=1}^N f_n$ converges to $f$ in
$\mathbb{L}^2(\mu)$ and on a set $A$ of full measure. It readily
follows that $f$ is nondecreasing on $A\cap I$ and vanishes on $A\cap
(\R-I)$. Modifying $f$ on the zero measure set $\R-A$, we get a
function in $\Mon_2(M,\mu)$ as claimed.
\end{proof}
The Borel structure coming from the weak topology on ${\mathbb L}^2(\mu)$ coincides
with the Borel structure coming from the norm topology (since an open
ball for the norm topology can be written as a countable intersection
of open sets for the weak topology, by the Hahn-Banach theorem).
Therefore, all the usual functions on $\Mon_2(M,\mu)$ are measurable.

If $\beta$ is a probability measure on $\Mon_2(M,\mu)$, we can define
a function $f\in {\mathbb L}^2(\mu)$ by $f(x)=\int g(x) d\beta(g)$. We claim
that the elements of $\Monm_2(M,\mu)$ are exactly such functions:
\begin{prop}
\label{prop_structure}
We have
  \begin{equation*}
  \Monm_2(M,\mu) = \left\{ \int_{\Mon_2(M,\mu)} g d\beta(g)\, :\,
  \beta\text{ probability measure on }\Mon_2(M,\mu)\right\}.
  \end{equation*}
\end{prop}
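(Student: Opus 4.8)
The plan is to prove both inclusions separately. The easier direction is to show that any $\int g \, d\beta(g)$ belongs to $\Monm_2(M,\mu)$: such an integral should be approximable in $\mathbb{L}^1(\mu)$ by finite Riemann-type sums $\sum_\ell \beta(E_\ell) g_\ell$ where $\{E_\ell\}$ partitions $\Mon_2(M,\mu)$ into small pieces and $g_\ell \in E_\ell$, and these sums are exactly extended convex combinations of the required form (with $\sum_\ell \beta(E_\ell) = 1$). Since $\Monm_2(M,\mu)$ is closed in $\mathbb{L}^1(\mu)$ by definition, the limit lands in $\Monm_2(M,\mu)$.

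The harder direction is the reverse inclusion, and this is where I expect the main obstacle. First I would check that the set $S$ of barycentres $\{\int g\, d\beta(g) : \beta \text{ a probability measure on } \Mon_2(M,\mu)\}$ is itself closed in $\mathbb{L}^1(\mu)$ and contains all finite convex combinations $\sum_\ell a_\ell g_\ell$ (with $\sum |a_\ell| \le 1$ and $g_\ell \in \Mon_2(M,\mu)$); since $\Monm_2(M,\mu)$ is by definition the $\mathbb{L}^1$-closure of such finite combinations, this would give $\Monm_2(M,\mu) \subseteq S$ and complete the proof. The subtlety is that the definition of $\Monm_2$ allows signed coefficients $a_\ell$ with $\sum|a_\ell| \le 1$, whereas a barycentre uses a genuine probability measure; the standard fix is that $\Mon_2(M,\mu)$ is symmetric (if $g \in \Mon_2(M,\mu)$ then $-g \in \Mon_2(M,\mu)$, since monotonicity and the $\mathbb{L}^2$ bound are preserved under negation) and contains $0$, so a signed combination with $\sum|a_\ell| \le 1$ can be rewritten as a true convex combination after absorbing signs into the $g_\ell$ and padding with the zero function to make the weights sum to one.

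For the closedness of $S$ I would exploit the compactness established in the previous lemma: the space $\mathcal{P}(\Mon_2(M,\mu))$ of probability measures on the compact metrizable space $\Mon_2(M,\mu)$ is itself compact for the weak-$*$ topology by Prokhorov's theorem. Given a sequence $f_n = \int g\, d\beta_n(g)$ converging in $\mathbb{L}^1(\mu)$ to some $f$, I would pass to a weak-$*$ convergent subsequence $\beta_n \to \beta$ and argue that the barycentre map $\beta \mapsto \int g\, d\beta(g)$ is continuous into $\mathbb{L}^2(\mu)$ with the weak topology (testing against any $u \in \mathbb{L}^2(\mu)$, the function $g \mapsto \int g u \, d\mu$ is bounded and continuous on $\Mon_2(M,\mu)$ by the very definition of the weak topology there, so $\int (\int g u\, d\mu)\, d\beta_n(g) \to \int (\int g u\, d\mu)\, d\beta(g)$). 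This identifies $f$ with the barycentre of $\beta$, so $f \in S$.

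The main obstacle will be the continuity/measurability bookkeeping in the barycentre argument: one must confirm that $g \mapsto \int g u\, d\mu$ is bounded and continuous on $\Mon_2(M,\mu)$ (boundedness follows from the uniform $\mathbb{L}^2(M)$ bound and Cauchy-Schwarz, continuity is immediate from the definition of weak convergence when $u$ is continuous compactly supported, and then extends to general $u \in \mathbb{L}^2(\mu)$ by density together with the uniform bound), and that the $\mathbb{L}^2$-weak limit can be reconciled with the given $\mathbb{L}^1$-strong limit $f$. Since weak $\mathbb{L}^2(\mu)$ convergence and strong $\mathbb{L}^1(\mu)$ convergence both force agreement of integrals against bounded test functions, the two limits must coincide $\mu$-almost everywhere, closing the argument.
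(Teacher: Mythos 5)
Your ``harder'' inclusion $\Monm_2(M,\mu)\subseteq S$ is correct, and it is in substance the paper's own proof of that inclusion: the paper likewise realizes the approximating finite combinations as barycentres of atomic probability measures (absorbing the signs of the $a_\ell$ into the functions, as you do), extracts a vaguely convergent subsequence $\beta_n\to\beta$ using the compactness of $\Mon_2(M,\mu)$, uses the continuity of $\Psi_u:g\mapsto\int gu\,d\mu$ to get weak convergence of the barycentres, and identifies the weak limit with the strong $\mathbb{L}^1$ limit. So that half of your proposal stands.

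The gap is in the direction you call easy, namely $S\subseteq \Monm_2(M,\mu)$. You assert that $\int g\,d\beta(g)$ ``should be approximable in $\mathbb{L}^1(\mu)$'' by Riemann sums $\sum_\ell \beta(E_\ell)g_\ell$ over a partition of $\Mon_2(M,\mu)$ into small pieces, but you never say small \emph{in which metric}, and that is exactly where the difficulty of this inclusion lives. The only topology available on $\Mon_2(M,\mu)$ — the one for which $\beta$ is a Borel measure and for which compactness is known — is the weak topology of $\mathbb{L}^2(\mu)$. A partition into pieces of small weak-metric diameter yields, by your own continuity argument, only \emph{weak} $\mathbb{L}^2$ convergence of the Riemann sums to $f_\beta$, whereas $\Monm_2(M,\mu)$ is only known to be closed for the \emph{strong} $\mathbb{L}^1(\mu)$ topology; so the step ``the limit lands in $\Monm_2(M,\mu)$'' does not follow as written. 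There are two ways to repair this. (i) The paper's way: keep the weak convergence and upgrade it, either by the Banach--Saks theorem (Ces\`aro means of the Riemann sums converge in $\mathbb{L}^2(\mu)$, hence in $\mathbb{L}^1(\mu)$, and are still extended convex combinations), or equivalently by Mazur's theorem, since $\Monm_2(M,\mu)\cap\{h:\norm{h}_{\mathbb{L}^2(\mu)}\leq M\}$ is convex and norm-closed, hence weakly closed. (ii) Justify the $\mathbb{L}^1$ Riemann approximation directly by proving that $\Mon_2(M,\mu)$ is totally bounded in $\mathbb{L}^1(\mu)$: truncating at level $K$, the tail $g\mathbf{1}_{|g|>K}$ has $\mathbb{L}^1(\mu)$ norm at most $M^2/K$ uniformly in $g$, while the truncated parts have variation at most $3K$ and hence lie in a set that is compact in $\mathbb{L}^1(\mu)$ by Lemma~\ref{compact}; one must also check that the pieces can be taken Borel for the weak topology (true, because the $\mathbb{L}^1$ norm is weakly lower semicontinuous). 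Either repair is a genuine additional argument — in the paper it is precisely the Banach--Saks step — so what you have omitted is not routine bookkeeping but the crux of this inclusion.
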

\begin{proof}
We have two inclusions to prove.

Consider first $f\in \Monm_2(M,\mu)$, we will show that it can be
written as $\int g d\beta(g)$ for some measure $\beta$. By definition
of $\Monm_2(M,\mu)$, there exists a sequence of atomic probability
measures $\beta_n$ on $\Mon_2(M,\mu)$ such that $f_n=\int g
d\beta_n(g)$ converges in ${\mathbb L}^1(\mu)$ to $f$. Since the
space $\Mon_2(M,\mu)$ is compact, the sequence of measures $\beta_n$
admits a convergent subsequence (that we still denote by $\beta_n$),
to a measure $\beta$. By definition of vague convergence, for any
continuous function $\Psi$ on $\Mon_2(M,\mu)$, $\int \Psi(g)
d\beta_n(g)$ tends to $\int \Psi(g) d\beta(g)$. Fix a continuous
compactly supported function $u$ on $\R$. By definition of the
topology on $\Mon_2(M,\mu)$, the map $\Psi_u:g\mapsto \int
u(x)g(x)d\mu(x)$ is continuous. Therefore, $\int
\Psi_u(g)d\beta_n(g)$ tends to $\int \Psi_u(g)d\beta(g)$, i.e., $\int
u(x)f_n(x)d\mu(x)$ tends to $\int u(x)f_\beta(x)d\mu(x)$, where
$f_\beta=\int g d\beta(g)$. This shows that $f_n$ converges weakly to
$f_\beta$. However, by assumption, $f_n$ converges in ${\mathbb
L}^1(\mu)$ to $f$. We deduce that $f=f_\beta$, as desired.

Conversely, consider a function $f_\beta$ for some probability
measure $\beta$ on $\Mon_2(M,\mu)$, let us show that it belongs to
$\Monm_2(M,\mu)$. Let us consider a sequence of atomic probability
measures $\beta_n$ converging vaguely to $\beta$. The arguments in
the previous paragraph show that the functions $f_{\beta_n}$ converge
weakly to $f_\beta$. By Banach-Saks theorem, extracting a subsequence
if necessary, we can ensure that $f_N=N^{-1}\sum_{n=1}^N f_{\beta_n}$
converges almost everywhere and in $\mathbb{L}^2(\mu)$ to $f_\beta$.
In particular, it converges to $f_\beta$ in ${\mathbb L}^1(\mu)$.
Since $f_N$ can be written as $\sum a_{\ell,N}f_{\ell,N}$ for some
functions $f_{\ell,N}\in \Mon_2(M,\mu)$ and some coefficients
$a_{\ell,N}$ with sum bounded by $1$, this shows that $f_\beta$
belongs to $\Monm_2(M,\mu)$.
\end{proof}

\begin{proof}[Proof of Lemma~\ref{lem_approxL2}]
Consider $f\in \Monm_2(M,\mu)$, and $\epsilon>0$. By
Proposition~\ref{prop_structure}, there exists a measure $\beta$ on
$\Mon_2(M,\mu)$ such that $f=\int g d\beta(g)$. For each $g\in
\Mon_2(M,\mu)$, let $K(g)$ be the smallest number such that $\int g^2
1_{|g|\geq K(g)} \leq \epsilon^2$. Fix some $K>0$. We have
  \begin{align*}
  f(x)&=\int_{K(g)< K} g(x) d\beta(g) + \int_{K(g)\geq K} g(x) d\beta(g)
  \\&= \int_{K(g)<K} g(x) {\bf 1}_{|g(x)|\leq K(g)} d\beta(g)
  + \int_{K(g)<K} g(x) {\bf 1}_{|g(x)|> K(g)} d\beta(g)
  + \int_{K(g)\geq K} g(x) d\beta(g).
  \end{align*}
The first term has variation bounded by $3K$. In the second term,
each function $g {\bf 1}_{|g|> K(g)}$ is monotonic on an interval and
null elsewhere, with ${\mathbb L}^2(\mu)$ norm bounded by $\epsilon$.
Therefore, the second term belongs to $\Monm_2(\epsilon, \mu)$.
Writing $A(K)=\{g\,:\, K(g)\geq K\}$ and $\alpha(K)=\beta(A(K))$, the
third term is the average over $A(K)$ of the functions $\alpha(K)
g\in \Mon_2(\alpha(K)M,\mu)$ with respect to the probability measure
${\bf 1}_{A(K)} d\beta(g)/\alpha(K)$. Therefore, it belongs to
$\Monm_2(\alpha(K)M, \mu)$. Taking $K$ large enough so that
$\alpha(K)M \leq \epsilon$, we infer that $f$ is the sum of a
function of bounded variation and a function in $\Monm_2(2\epsilon,
\mu)$.
\end{proof}

\section{Strong invariance principle by approximation}\label{secasip}

Let $(X_i)_{i \geq 1}$ be a sequence of random variables. Assume that
\begin{enumerate}
\item For each $m \in \N$ there exists a sequence $(X_{i,m})_{i
    \geq 1}$ such that
  \begin{equation*}
  \limsup_{n\to\infty} \left| \frac{ \sum_{i=1}^n X_i - X_{i,m}}{\sqrt{n\log \log n}}\right|
  \leq \epsilon(m)  \quad \text{almost surely},
  \end{equation*}
where $\epsilon(m)$ tends to $0$ as $m$ tends to infinity.
\item For each $m \in \N$, the sequence  $(X_{i,m})_{i \geq 1}$
    satisfies a strong invariance principle: there exists a
    sequence $(Z_{i,m})_{i \geq 1}$ of i.i.d.\ Gaussian random
    variables with mean $0$ and variance $\sigma^2_m$ such that
  \begin{equation*}
 \lim_{n \rightarrow \infty}
  \frac{ \sum_{i=1}^n X_{i,m}-Z_{i,m}}{\sqrt{n\log \log n}}=0 \quad \text{almost surely.}
  \end{equation*}
We also assume that $\sigma_m^2$ converges as  $m\to\infty$ to a
limit $\sigma^2$.
\item  There exists an infinite subset $\boA$ of $\N$ such that,
    for any $A\in \boA$, the
$\sigma$-algebras $\sigma(Z_{i,m})_{i<A,\, m\in \N}$ and
$\sigma(Z_{i,m})_{i\geq A,\, m\in \N}$ are independent.
\end{enumerate}

\begin{prop}\label{diag}
Under the assumptions 1, 2 and 3, there exists a sequence  $(Z_i)_{i
\geq 1}$ of i.i.d.\ Gaussian random variables with mean zero and
variance $\sigma^2$ such that
  \begin{equation}\label{asipfinal}
  \lim_{n\to\infty} \frac{ \sum_{i=1}^n X_i-Z_i}{\sqrt{n\log \log n}} = 0 \quad
  \text{almost surely.}
  \end{equation}
\end{prop}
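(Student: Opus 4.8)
The plan is to build the limiting Gaussian sequence by pasting together, on consecutive blocks of integers, suitably rescaled copies of the sequences $(Z_{i,m})$, letting $m$ grow from block to block. First I would dispose of the degenerate case $\sigma=0$: here assumptions 1 and 2 together with the ordinary law of the iterated logarithm for the i.i.d.\ Gaussians $(Z_{i,m})$ give $\limsup_n |\sum_{i=1}^n X_i|/\sqrt{n\log\log n} \le \epsilon(m)+\sqrt{2}\,\sigma_m$ for every $m$; letting $m\to\infty$ (so that $\sigma_m\to 0$) shows $\sum_{i=1}^n X_i = o(\sqrt{n\log\log n})$ almost surely, and one takes $Z_i\equiv 0$. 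So from now on assume $\sigma>0$, whence $\sigma_m>0$ for all large $m$.

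Set $m_k=k$ and $c_k=\sigma/\sigma_{m_k}\to 1$, choose an increasing sequence of cut points $A_0<A_1<\dots$ in $\boA$ growing so fast that $s_k:=\sqrt{A_k\log\log A_k}$ at least doubles at each step, and define $Z_i=c_k Z_{i,m_k}$ for $i\in(A_{k-1},A_k]$. Since each $(Z_{i,m_k})_i$ is i.i.d.\ $N(0,\sigma_{m_k}^2)$, the rescaling turns a block into i.i.d.\ $N(0,\sigma^2)$; and since assumption 3 applied successively at the cut points $A_k\in\boA$ makes the block-families $\{Z_{i,m}:i\in(A_{k-1},A_k]\}$ mutually independent, the resulting sequence $(Z_i)$ is i.i.d.\ $N(0,\sigma^2)$, as required. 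It then remains to show that $S_n:=\sum_{i=1}^n (X_i-Z_i)=o(\sqrt{n\log\log n})$ almost surely.

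For the estimate, write $W_n^{(m)}=\sum_{i=1}^n (X_i-Z_{i,m})$ (so $\limsup_n |W_n^{(m)}|/\sqrt{n\log\log n}\le\epsilon(m)$ by 1 and 2) and $Y_n^{(m)}=\sum_{i=1}^n Z_{i,m}$. For $n$ in block $K$, I would split $S_n$ into the complete blocks $k<K$ plus a partial last block, each complete block contributing $(W^{(m_k)}_{A_k}-W^{(m_k)}_{A_{k-1}})+(1-c_k)(Y^{(m_k)}_{A_k}-Y^{(m_k)}_{A_{k-1}})$. The first pieces are controlled through the $\limsup$-bound on $W^{(m_k)}$, which contributes a factor $\epsilon(m_k)$; the second pieces through the law of the iterated logarithm for $(Z_i)$, the prefactor $1-c_k$ being small. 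Thanks to the geometric growth of $s_k$, both summed contributions are at most $C\big(\sup_{k\ge k_0}\epsilon(m_k)+\sup_{k\ge k_0}|1-c_k|\big)\sqrt{n\log\log n}$, plus a term from the finitely many initial blocks that is $o(\sqrt{n\log\log n})$; letting $k_0\to\infty$ forces $\limsup_n |S_n|/\sqrt{n\log\log n}=0$.

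The \emph{main obstacle} is a diagonalization issue hidden in the last paragraph: the bound $|W_n^{(m_k)}|\le 2\epsilon(m_k)\sqrt{n\log\log n}$ only holds for $n\ge N_{m_k}$, where $N_{m_k}(\omega)$ is random, so it cannot be matched against the deterministic cut points $A_k$ directly (for a fixed $\omega$, the map $m\mapsto N_m(\omega)$ may grow arbitrarily fast). I would resolve this by a preliminary Borel--Cantelli step: since $N_m<\infty$ almost surely, pick deterministic thresholds $\tilde N_m$ with $\mathbb{P}(N_m>\tilde N_m)\le 2^{-m}$, so that almost surely $N_m\le\tilde N_m$ for all large $m$; then select the cut points with $A_{k-1}\ge\tilde N_{m_k}$, which guarantees that for all large $k$ the block $(A_{k-1},A_k]$ already lies in the regime where the $\limsup$-bound is effective. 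The analogous (but easier, single-threshold) care via the ordinary law of the iterated logarithm handles the Gaussian block sums, and the rest is routine bookkeeping with the geometric block growth.
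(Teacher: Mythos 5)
Your proposal is correct and takes essentially the same route as the paper's own proof: the same block construction with cut points chosen in $\boA$, the same rescaling $Z_i=(\sigma/\sigma_{m_k})Z_{i,m_k}$ with assumption 3 giving the i.i.d.\ property, the same Borel--Cantelli device to convert the random thresholds into deterministic ones (the paper's sets $\Omega_m$ with $\p(\Omega_m)\geq 1-2^{-m}$), and the same geometric growth of $\sqrt{A_k\log\log A_k}$ (the paper's condition \eqref{decay}) to sum the block errors. The only cosmetic difference is that you handle the correction term $(1-c_k)\sum Z_{i,m_k}$ via the Gaussian law of the iterated logarithm, whereas the paper controls the differences $D_i=Z_i-Z_{i,m(i)}$ by an exponential maximal inequality; both are routine and equivalent in effect.
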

\begin{proof}
The idea of the proof is to use a diagonal argument: we will use the
$Z_{i,0}$ for some time, then the $Z_{i,1}$ for a longer time, and so
on, to construct the $Z_i$.

Let $A_m$ be a sequence of elements of $\boA$ tending to infinity
fast enough. More precisely, we choose $A_m$  in such a way that
there exists a set $\Omega_m$ with probability greater than
$1-2^{-m}$ on which, for any $n\geq A_m$,
\begin{equation*}
  \left|\frac{ \sum_{i=1}^n X_{i,m}-Z_{i,m}}{\sqrt{n\log \log n}}\right| \leq \epsilon(m)
\quad \text{and} \quad
  \left| \frac{ \sum_{i=1}^n X_i - X_{i,m}}{\sqrt{n\log \log n}}\right|
  \leq 2\epsilon(m).
\end{equation*}
The assumptions 1 and 2 ensure that these two properties are
satisfied   provided $A_m$ is large enough. We also choose $A_m$ in a
such a way that, for  $j<m-1$,
\begin{equation}
  \label{decay}
   \epsilon(j)\sqrt{A_{j+1} \log\log A_{j+1}} < 2^{-(m-j)} \epsilon(m)\sqrt{A_m \log \log A_m}.
  \end{equation}
Indeed, if the  $A_j$'s have been defined for $j<m$, it suffices to
take $A_m$ large enough for \eqref{decay} to hold.

With this choice of $A_m$, we infer that for any $\omega\in \Omega_m$
and any $n\geq A_m$,
  \begin{equation*}
  \left|\sum_{i=1}^n X_i-Z_{i,m}\right|\leq 3\epsilon(m) \sqrt{n\log\log n}.
  \end{equation*}
  Hence, for any $\omega\in
\Omega_m$ and any $n\geq A_m$,
 \begin{equation}
 \label{intermed}
  \left|\sum_{i=A_m}^n X_i-Z_{i,m}\right|\leq 6\epsilon(m) \sqrt{n\log\log n}.
 \end{equation}

For $i\in [A_m, A_{m+1}-1]$, let $m(i)=m$. Let $(\delta_k)_{k \geq
1}$ be a sequence of i.i.d.\ Gaussian random variables with mean zero
and variance $\sigma^2$, independent of the array $(Z_{i,m})_{i\geq
1, m \geq 1}$. We now construct the sequence $Z_i$ as follows: if
$\sigma_{m(i)}=0$, then $Z_i=\delta_i$, else $Z_i=
(\sigma/\sigma_{m(i)}) Z_{i, m(i)}$. By construction, thanks to the
assumption 3, the $Z_i$'s are i.i.d.\ Gaussian random variables with
mean zero and variance $\sigma^2$. Let us show that they satisfy
\eqref{asipfinal}.

Let $D_i=Z_i-Z_{i,m(i)}$ and note that $(D_i)_{i \geq 1}$ is a
sequence of independent Gaussian random variables with mean zero and
variances $\Var(D_i)=(\sigma-\sigma_{m(i)})^2$. Since $\sigma_{
m(i)}$ converges to $\sigma$ as $i$ tends to infinity, it follows
that
\[
  \text{letting} \quad v_n= \frac 1 n \Var\Big( \sum_{i=1}^n D_i \Big), \quad \text{then}
  \quad \lim_{n \rightarrow  \infty} v_n=0.
\]
From the basic inequality
\[
  {\mathbb P}\Big
  ( \max_{1 \leq k \leq n} \Big| \sum_{i=1}^k D_i \Big|>x\Big) \leq
  2 \exp \Big(-\frac{ x^2}{2 n v_n}\Big)\, ,
\]
it follows that
  \begin{equation*}
  \lim_{n \rightarrow \infty}
  \frac{\sum_{i=1}^n Z_{i,m(i)}-Z_i}{\sqrt{n\log\log n}} = 0 \quad \text{almost surely.}
  \end{equation*}
To conclude the proof, it remains to prove that
  \begin{equation}
  \label{eq_to_prove}
  \lim_{n \rightarrow \infty}
   \frac{\sum_{i=1}^n X_i - Z_{i,m(i)}}{\sqrt{n \log\log n}} = 0 \quad
   \text{almost surely}.
  \end{equation}
Let $B=\{ \omega : \omega \in \liminf \Omega_m \}$. By
 Borel-Cantelli, ${\mathbb P}(B)=1$. For $\omega \in B$,  there
 exists $m_0(\omega)$ such that  $\omega$ belongs to all the
 $\Omega_m$ for $m\geq
m_0(\omega)$. For $n\geq A_{m_0(\omega)}$, we have  (denoting by $M$
the greater integer such that $A_M \leq n$)
  \begin{equation*}
  \left|\sum_{i=1}^n X_i - Z_{i,m(i)}\right|
  \leq \sum_{i=1}^{A_{m_0(\omega)}-1}|X_i-Z_{i,m(i)}|
  + \sum_{m=m_0(\omega)}^{M-1} \left|\sum_{i=A_m}^{A_{m+1}-1}X_i - Z_{i,m}\right|
  + \left| \sum_{i=A_M}^n X_i-Z_{i,M}\right|.
  \end{equation*}
Taking into account   \eqref{decay} and \eqref{intermed}, we obtain
  \begin{align*}
   \left|\sum_{i=1}^n X_i - Z_{i,m(i)}\right|
  &
   \leq C(\omega)
   + \sum_{m=1}^{M-1} 6\epsilon(m) \sqrt{A_{m+1} \log\log A_{m+1}}
   + 6\epsilon(M) \sqrt{n\log\log n}
  \\&
   \leq C(\omega) + \sum_{m=1}^{M-2} 6\epsilon(M) \sqrt{A_M \log\log A_M} 2^{-(M-m)}
   \\&\ \ \ %
   + 6\epsilon(M-1) \sqrt{A_M \log\log A_M}
   +6\epsilon(M) \sqrt{n\log\log n}
  \\&
   \leq C(\omega) + 9(\epsilon(M-1)+\epsilon(M)) \sqrt{n\log \log n}.
  \end{align*}
Since $\epsilon(M-1)+\epsilon(M)$ tends to  zero as $n$ tends to
infinity, this proves \eqref{eq_to_prove} and completes the proof of
Proposition~\ref{diag}.
\end{proof}

\begin{rmk}
The proposition would also apply to random variables taking values in
$\R^d$ or in Banach spaces (with the same proof), but we have
formulated it only for real-valued random variables in view of our
applications. Indeed, the class of functions we consider relies on
monotonicity which is a purely one-dimensional notion.
\end{rmk}

\section{Proof of Theorem~\ref{ASmap2} on GPM maps}
\label{secproofasmap2}
To prove Theorem~\ref{ASmap2}, we should establish the convergence of
the series \eqref{var} as well as the asymptotic results 1., 2.\ and
3.\ described in Theorem~\ref{ASmap1}. The convergence of \eqref{var}
and the asymptotics 1.\ and 2.\ have been proved in Dedecker, Gou\"ezel
and Merlev\`ede (2010). Therefore it only remains to prove the almost
sure invariance principle.

To do this, we apply Proposition~\ref{diag} to the sequences $X_i=
f\circ T^i -\nu(f)$ and $X_{i,m}=\bar f_m \circ T^i - \nu(\bar f_m)$,
where the function $\bar f_m$ has been constructed in
Lemma~\ref{lem_approxH}. Let us denote by $S_n (f )= \sum_{i=0}^{n-1} (f \circ T^i - \nu (f))$. To apply Proposition~\ref{diag}, we have to
check the assumptions 1., 2.\ and 3.\ of Section~\ref{secasip}.

The function $g_m=f-\bar f_m$ belongs to $\Monm(H_m,\nu)$ where
$H_m=\min (H(m),H)$, by Lemma~\ref{lem_approxH}. Therefore, it
belongs to the class of functions to which the results of Dedecker,
Gou\"ezel and Merlev\`ede (2010) apply: $S_n ( g_m)$ satisfies a central limit
theorem and a bounded law of the iterated logarithm. In particular,
applying Theorem 1.5 of this article (and Section 4.5 there to
compute the constant $M(m)$) we get that, almost surely,
  \begin{equation*}
  \limsup \frac{1}{\sqrt{n\log \log n}} \left|\sum_{i=0}^{n-1} (g_m \circ T^i-\nu(g_m)) \right| \leq M(m),
  \end{equation*}
where $M(m)= C \int_0^{\infty} x
(H_m(x))^{\frac{1-2\gamma}{1-\gamma}} dx$, $C$ being some positive
constant. Since $M(m)$ tends to zero as $m$ tends to infinity, the
assumption 1.\ of Section~\ref{secasip} follows by choosing
$\epsilon(m)=2M(m)$.

Since the function $\bar f_m$ has bounded variation
%belongs to $\Monm({\bf 1}_{[0,
%m]},\nu)$,
we can apply Item 2 of Theorem 3.1 of Merlev\`ede and Rio
(2012) to the sequence $(X_{i,m})$ (see their Remark 3.1 for the case
of GPM maps). Hence there exists a sequence $(Z_{i,m})_{i \geq 1}$ of
i.i.d.\ Gaussian random variables with mean $0$ and variance
$\sigma^2_m=\sigma^2(\bar f_m)$ such that
\begin{equation*}
 \lim_{n \rightarrow \infty}
  \frac{ \sum_{i=1}^n X_{i,m}-Z_{i,m}}{\sqrt{n\log \log n}}=0 \quad \text{almost surely.}
\end{equation*}
More precisely, it follows from their construction (see the
definition of the variables $V^*_{k,L}$ in Section 4.2 of Merlev\`ede
and Rio (2010)) that the assumption 3.\ of Section~\ref{secasip} is
satisfied with ${\mathcal A}=\{ 2^L, L \in {\mathbb N}^*\}$.

To check the assumption 2.\ of Section~\ref{secasip}, it remains only
to prove that $\sigma^2_m$ converges to $\sigma^2$ as $m$ tends to
infinity. We have $f=\bar f_m+g_m$, therefore
  \begin{equation*}
  \frac{S_n (f )}{\sqrt{n}}=\frac{S_n (\bar f_m)}{\sqrt{n}}
  + \frac{S_n ( g_m)}{\sqrt{n}}\, .
  \end{equation*}
The term on the left converges in distribution to a Gaussian with
variance $\sigma^2$, and the terms on the right converge to

(non-independent) Gaussians with respective variances $\sigma_m^2$
and $\sigma^2(g_m)$. To conclude, it suffices to show that
$\sigma^2(g_m)$ converges to $0$ when $m$ tends to infinity.

As we have explained above, the results of Dedecker,
Gou\"ezel and Merlev\`ede (2010) apply, and show that $S_n(g_m)$ satisfies
a central limit theorem. From the same paper (see Sections 2.2 and 4.1 there), 
we get  the following estimate
on the asymptotic variance $\sigma^2(g_m)$ of $n^{-1/2} S_n(g_m)$~: there
exists a positive constant $C$ such that
\[
\sigma^2(g_m) \leq C\int_0^{\infty} x (H_m(x))^{\frac{1-2\gamma}{1-\gamma}} dx \, ,
\]
and the second term on right hand tends to zero as $m$ tends to
infinity by using \eqref{lilcond} and the dominated convergence
theorem. The result follows.

Hence, we have checked that the assumptions 1., 2.\ and 3.\ of
Section~\ref{secasip} are satisfied. This completes the proof of  the
almost sure invariance principle. \qed

\section{A bounded LIL for
\texorpdfstring{$\phi$}{phi}-dependent sequences}
\label{secboundedlawphi}

Let $(\Omega ,\mathcal{A}, \p)$ be a probability space, and let
$\theta :\Omega \mapsto \Omega $ be a bijective bimeasurable
transformation preserving the probability ${\p}$. Let ${\mathcal
F}_0$ be a sub-$\sigma$-algebra of $\mathcal{A}$ satisfying
${\mathcal F}_0 \subseteq \theta^{-1}({\mathcal F}_0)$.

\begin{defn}\label{defphi}
For any integrable random variable $X$, let us write
$X^{(0)}=X- \E(X)$.
For any random variable $Y=(Y_1, \cdots, Y_k)$ with values in
${\mathbb R}^k$ and any $\sigma$-algebra $\F$, let
\[
\phi(\F, Y)= \sup_{(x_1, \ldots , x_k) \in {\mathbb R}^k}
\left \| \E \Big(\prod_{j=1}^k (\I_{Y_j \leq x_j})^{(0)} \Big | \F \Big)^{(0)}
\right\|_\infty.
\]
For a sequence ${\bf Y}=(Y_i)_{i \in {\mathbb Z}}$, where $Y_i=Y_0
\circ \theta^i$ and $Y_0$ is an $\F_0$-measurable and real-valued
random variable, let
\begin{equation*}
\phi_{k, {\bf Y}}(n) = \max_{1 \leq l \leq
k} \ \sup_{ n\leq i_1\leq \ldots \leq i_l} \phi(\F_0,
(Y_{i_1}, \ldots, Y_{i_l})) .
\end{equation*}
\end{defn}

The interest of those mixing coefficients is that they are not too
restrictive, so they can be used to study several classes of dynamical
systems, and that on the other hand they are strong enough to yield
correlation bounds for piecewise monotonic functions (or, more
generally, functions in $\Monm_p(M,\mu)$). In particular, we have the
following:
\begin{lma}\label{covaphi}
Let ${\bf Y}=(Y_i)_{i \in {\mathbb Z}}$, where $Y_i=Y_0 \circ
\theta^i$ and $Y_0$ is an $\F_0$-measurable random variable. Let $f$
and $g$ be two functions from ${\mathbb R}$ to ${\mathbb R}$ which
are monotonic on some interval and null elsewhere. Let $p \in [1,
\infty]$. If $\|f(Y_0)\|_p < \infty$, then, for any positive integer
$k$,
\[
  \|{\mathbb E}(f(Y_k)|{\mathcal F}_0)-{\mathbb E}(f(Y_k))\|_p \leq 2
  (2\phi_{1, {\bf Y}}(k))^{(p-1)/p}\|f(Y_0)\|_p \, .
\]
If moreover $p\geq 2$ and $\|g(Y_0)\|_p < \infty$, then for any positive integers
 $i\geq j\geq k$,
\[
\|{\mathbb E}(f(Y_i)^{(0)}g(Y_j)^{(0)}|{\mathcal F}_0)-
{\mathbb E}(f(Y_i)^{(0)}g(Y_j)^{(0)})\|_{p/2}\leq 8
 (4\phi_{2, {\bf Y}}(k))^{(p-2)/p}\|f(Y_0)\|_{p}\|g(Y_0)\|_p \, .
\]
\end{lma}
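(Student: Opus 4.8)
The plan is to reduce both inequalities to estimates on half-line indicators, for which the coefficients $\phi_{1,{\bf Y}}$ and $\phi_{2,{\bf Y}}$ give exactly the required control, and then to interpolate between an $\mathbb{L}^\infty$ bound (coming from the mixing coefficient) and an integrability bound (coming from $\|f(Y_0)\|_p$). I would first reduce to $f$ nondecreasing and null outside an interval, by splitting a function monotonic on an interval into its nonnegative and nonpositive monotone pieces and treating each separately. For such $f$ the Stieltjes representation $f(y)=\int \I_{y\ge t}\,df(t)$ gives
\[
\E(f(Y_k)\mid\F_0)-\E(f(Y_k))=\int\bigl(\E(\I_{Y_k\ge t}\mid\F_0)-\p(Y_k\ge t)\bigr)\,df(t),
\]
where the finitely many boundary jumps contribute only a bounded number of extra half-line indicators. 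By the very definition of $\phi_{1,{\bf Y}}(k)$, each integrand is bounded in $\mathbb{L}^\infty$ by $\phi_{1,{\bf Y}}(k)$, and it is bounded in $\mathbb{L}^1$ by $2\p(Y_k\ge t)=2\p(Y_0\ge t)$ using stationarity. These are the two ingredients.

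The key step, and the main obstacle, is to combine these two controls so that the right-hand side is exactly $\|f(Y_0)\|_p$ with the power $(p-1)/p$ on the coefficient. A direct Minkowski/H\"older estimate, bounding each integrand in $\mathbb{L}^p$ by $\phi^{(p-1)/p}(2\p(Y_0\ge t))^{1/p}$ and integrating against $|df|$, is too lossy: it produces either the total variation of $f$ or the integral $\int_0^\infty \p(f(Y_0)>s)^{1/p}\,ds$, both of which are strictly larger than $\|f(Y_0)\|_p$, the latter even diverging in cases where $\|f(Y_0)\|_p$ is finite. The reason is that such estimates treat the levels independently and ignore the joint structure of $\E(\I_{Y_k\ge t}\mid\F_0)-\p(Y_k\ge t)$ as a function of $t$.

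To avoid this I would argue by duality, writing (for $1/p+1/q=1$)
\[
\norm{\E(f(Y_k)\mid\F_0)-\E(f(Y_k))}_p=\sup\bigl\{\Cov(f(Y_k),V)\,:\,V\ \F_0\text{-measurable},\ \norm{V}_q\le 1\bigr\},
\]
and then run Ibragimov's covariance argument on the whole monotone function at once. The conditional-expectation operator is estimated as a single object: the uniform $\mathbb{L}^\infty$ indicator bound is interpolated against the integrability of $f(Y_0)$ through its quantile (monotone rearrangement), so that the coefficient enters as an integration scale rather than as a multiplicative total-variation factor. This is Rio's quantile interpolation, and it is precisely what reconstructs $\|f(Y_0)\|_p$ on the right, the numerical constants $2$ and $2\phi$ being tracked through the centerings and the $\mathbb{L}^1$ bounds.

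The second inequality follows the same scheme with the two-variable coefficient. Using the double Stieltjes representation, I would write $f(Y_i)^{(0)}g(Y_j)^{(0)}$ as a superposition against $df\otimes dg$ of products $(\I_{Y_i\ge s})^{(0)}(\I_{Y_j\ge t})^{(0)}$ of two centered half-line indicators; the coefficient $\phi_{2,{\bf Y}}(k)$ bounds exactly the centered conditional expectation of such products in $\mathbb{L}^\infty$. The same interpolation, now targeting the $\mathbb{L}^{p/2}$ norm with two $\mathbb{L}^p$ factors, produces the power $(p-2)/p$ on $4\phi_{2,{\bf Y}}(k)$ and the product $\|f(Y_0)\|_p\|g(Y_0)\|_p$, the constants $8$ and $4\phi$ tracking the four half-lines (two per function) and the centerings. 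I expect the interpolation step, namely obtaining $\|f(Y_0)\|_p$ (respectively the product of norms) rather than total-variation factors, to be the only delicate point; the reduction to indicators and the bookkeeping of constants are routine.
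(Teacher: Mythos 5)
Your proposal is correct and follows essentially the same route as the paper: the paper also reduces the monotonicity to the statement that level sets of $f$ and $g$ are intervals (giving $\phi(\F_0,f(Y_k))\leq 2\phi_{1,{\bf Y}}(k)$ and the factor $4$ for pairs), then writes the conditional-expectation norms by duality as suprema of covariances against $\F_0$-measurable test functions in the dual ball, exactly as you do. The only difference is that where you sketch the quantile-interpolation (Ibragimov--Rio) argument inline, the paper simply cites it as a black box, namely Proposition 2.1 of Dedecker (2004) for the first inequality and Proposition 6.1 of Dedecker, Merlev\`ede and Rio (2009) for the second.
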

\begin{proof}
Note first that, for any positive integers $i\geq j \geq k$,
\begin{align*}
  \phi({\mathcal F}_0, f(Y_k)) &\leq  2 \phi ({\mathcal F}_0, Y_k)
  \leq 2\phi_{1, {\bf Y}}(k),\\
  \phi({\mathcal F}_0, (f(Y_j), g(Y_i))) &\leq   4
  \phi ({\mathcal F}_0, (Y_j, Y_i))\leq 4 \phi_{2, {\bf Y}}(k)\, .
\end{align*}
This follows from definition \eqref{defphi}, by noting that $\{f\leq
t\}$ (and also $\{ g \leq s\}$) is either an interval or the
complement of an interval.

To prove the first inequality of the lemma, let us note that
\[
\|{\mathbb E}(f(Y_k)|{\mathcal F}_0)-{\mathbb E}(f(Y_k))\|_p =
\sup_{Z \in B_{p/(p-1)} ({\mathcal F}_0)}{\mathrm{Cov}}(Z, f(Y_k)) \, ,
\]
where $B_{q} ({\mathcal F}_0)$ is the set of ${\mathcal
F}_0$-measurable random variables $Z$ such that $\|Z\|_q \leq 1$.
Proposition 2.1 of Dedecker (2004) states that $| \mathrm{Cov}(Z, Y) |
\leq 2 \phi(\sigma(Z),Y)^{(p-1)/p} \|Y\|_p \|Z\|_{p/(p-1)}$. Since $\phi(\sigma(Z),
f(Y_k)) \leq \phi({\mathcal F}_0, f(Y_k)) \leq 2\phi_{1, {\bf
Y}}(k)$, we obtain the first inequality of Lemma~\ref{covaphi} as
desired.

For the second inequality, we note in the same way that
\[
\|{\mathbb E}(f(Y_i)^{(0)}g(Y_j)^{(0)}|{\mathcal F}_0)-
{\mathbb E}(f(Y_i)^{(0)}g(Y_j)^{(0)})\|_{p/2}
=\sup_{Z \in B_{p/(p-2)} ({\mathcal F}_0)}{\mathrm{Cov}}(Z, f(Y_i)^{(0)} g(Y_j)^{(0)})\, .
\]
Proposition 6.1 of Dedecker, Merlev\`ede and Rio (2009) gives a control
of the covariance in terms of $\phi({\mathcal F}_0, (f(Y_j),
g(Y_i)))$. Since this quantity is bounded by $4 \phi_{2, {\bf
Y}}(k)$, the result follows.
\end{proof}

The main result of this section is the following proposition, showing
that a suitable polynomial assumption on mixing coefficients implies
a bounded law of the iterated logarithm for piecewise monotonic ${\mathbb L}^2$
functions.

\begin{prop}\label{phi}
Let  $X_i = f(Y_i) - \E ( f(Y_i))$, where $Y_i=Y_0 \circ \theta^i$
and $Y_0$ is an $\F_0$-measurable random variable. Let
\[S_n=S_n(f)=\sum_{k=1}^n X_k \, ,\]
and let $P_{Y_0}$ be the distribution of $Y_0$. Assume that
\begin{equation}\label{DDM}
\sum_{k \geq 1} k^{1/\sqrt 3 -1/2} \phi^{1/2}_{2, {\bf Y}}(k) < \infty\, .
\end{equation}
If $f$ belongs to $\Monm_2(M,P_{Y_0})$ for some $M>0$, then
\begin{equation} \label{but2}
\sum_{n >0}\frac{1}{n} \p \Big( \max_{1 \leq k \leq n} |S_k| > 3C M
\sqrt{  n \log \log n}\Big) < \infty \, ,
\end{equation}
where $ C = 16 \sum_{ k \geq 0} \phi_{1, {\bf Y}}^{1/2} (k)$.
\end{prop}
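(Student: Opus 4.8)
The plan is to reduce the statement about functions in $\Monm_2(M,P_{Y_0})$ to the case of a single monotonic function via the maximal inequality, and to obtain the bounded law of the iterated logarithm for a single monotonic function through a dyadic (chaining) decomposition of the maximum combined with the covariance estimates of Lemma~\ref{covaphi}. First I would treat the building block: a function $f$ that is monotonic on an interval and null elsewhere, with $\|f(Y_0)\|_2 \leq M$. The key computation is a variance bound for the partial sums. Writing $S_n = \sum_{k=1}^n X_k$ with $X_k = f(Y_k)^{(0)}$, I expand $\E(S_n^2) = \sum_{i,j} \E(X_i X_j)$ and control the covariances $\E(X_i X_j)$ using the first inequality of Lemma~\ref{covaphi} (or, for the conditional second moments that appear in the maximal inequality, the second inequality). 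The stationarity of $\mathbf{Y}$ and the summability hypothesis \eqref{DDM} — note that $\sum_k \phi_{1,\mathbf Y}^{1/2}(k)<\infty$ is implied, since $\phi_{1,\mathbf Y}\le \phi_{2,\mathbf Y}$ and the weight $k^{1/\sqrt3 - 1/2}$ is eventually bounded below — should yield $\E(S_n^2)\leq (CM)^2 n$ with $C = 16\sum_{k\ge0}\phi_{1,\mathbf Y}^{1/2}(k)$, matching the constant in the statement.

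Next I would establish the probabilistic maximal inequality. The natural tool is a Rosenthal-type or a Móricz-type maximal moment inequality for the $\phi$-dependent sequence $(X_k)$: using the conditional covariance control from the second inequality of Lemma~\ref{covaphi}, one bounds $\E(\max_{1\le k\le n}|S_k|^{2})$ (or a higher even moment) by a constant times $(CM)^2 n$ up to logarithmic corrections. The exponent $1/\sqrt3$ in \eqref{DDM} is precisely what is needed to make a $6$th-moment (or a $p$-th moment with $p$ close to the value forcing $1/\sqrt3 = 1/2 - (p-2)/p$ type balancing) version of such an inequality converge; I expect this is where the delicate arithmetic of the summability condition is consumed. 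With such a maximal moment bound in hand, Markov's inequality on a dyadic block $n\in[2^\ell, 2^{\ell+1})$ gives $\p(\max_{k\le 2^{\ell+1}}|S_k| > 3CM\sqrt{n\log\log n})$ decaying fast enough that, after the standard Borel--Cantelli / blocking manipulation turning $\sum_\ell$ into the weighted sum $\sum_n \frac1n$, the series \eqref{but2} converges.

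To pass from a single monotonic function to a general $f\in\Monm_2(M,P_{Y_0})$, I would use linearity and the fact that the left-hand side of \eqref{but2} is controlled by a norm-like quantity. For a finite convex combination $f=\sum_{\ell} a_\ell f_\ell$ with $\sum_\ell |a_\ell|\le 1$ and each $f_\ell\in\Mon_2(M,P_{Y_0})$, the partial sums satisfy $S_n(f) = \sum_\ell a_\ell S_n(f_\ell)$, so $\max_{k\le n}|S_k(f)| \le \sum_\ell |a_\ell|\max_{k\le n}|S_k(f_\ell)|$; by convexity of the event and the uniform bound $3CM\sqrt{n\log\log n}$ valid for each $f_\ell$, the bounded LIL transfers to $f$ with the same constant. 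Finally, the passage to the $\mathbb{L}^1(P_{Y_0})$-closure defining $\Monm_2$ requires showing that the almost-sure bounded-LIL property is preserved under $\mathbb{L}^1$ limits; here I would invoke the very maximal inequality \eqref{but2} itself applied to the differences $f - f_N$, exploiting that these differences lie in $\Monm_2(2M,P_{Y_0})$-type classes, so that the approximating error in the maximum is uniformly controlled.

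I expect the main obstacle to be the maximal moment inequality in the second step: getting the maximum inside the expectation (rather than just a bound on $\E(S_n^2)$) while keeping the constant sharp and ensuring the summability hypothesis \eqref{DDM} with its specific exponent $1/\sqrt3 - 1/2$ is exactly what the argument needs. The covariance estimates of Lemma~\ref{covaphi} give conditional-expectation control with the right $p$-dependence, but organizing these into a telescoping or martingale-type decomposition that produces a clean maximal bound — and verifying that $3CM$ rather than some larger multiple of $CM$ suffices — is the technical heart of the proof.
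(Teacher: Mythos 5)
There is a genuine gap at what you yourself identify as the technical heart of the argument: no Rosenthal-- or M\'oricz--type maximal \emph{moment} inequality, combined with Markov's inequality and blocking, can prove \eqref{but2}. If $\E\bigl(\max_{1\le k\le n}|S_k|^p\bigr)\le C_p\, n^{p/2}$ for some fixed $p$, then Markov's inequality only gives $\p\bigl(\max_{1\le k\le n}|S_k|>3CM\sqrt{n\log\log n}\bigr)\le C'_p(\log\log n)^{-p/2}$, and the series $\sum_n n^{-1}(\log\log n)^{-p/2}$ \emph{diverges} for every $p$ (substituting $u=\log n$, it compares to $\int (\log u)^{-p/2}\,{\rm d}u$, which diverges). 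Convergence of \eqref{but2} requires a tail bound of order $(\log n)^{-a}$ with $a>1$, i.e.\ $\exp(-a\log\log n)$, which is genuinely exponential in $\log\log n$. Worse, since $f(Y_0)$ is only assumed square integrable, moments of $S_n$ of order larger than $2$ need not exist at all, so the ``$6$th-moment version'' you invoke is not available. The paper's proof is built precisely around this obstruction: it truncates at level $M\sqrt{n}/\sqrt{\log\log n}$, decomposes the truncated part as a martingale plus a coboundary, applies Pinelis's exponential inequality to the \emph{bounded} martingale differences (Lemma~\ref{lem3}), and controls the deviation of the quadratic variation from its mean by a Wu--Zhao moment inequality (Lemma~\ref{lem4}), which is where hypothesis \eqref{DDM} with its exponent $1/\sqrt3$ is actually consumed; the untruncated remainder is handled by a plain $\mathbb{L}^1$/Fubini estimate (Lemma~\ref{lem2}). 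With the stated constant $C$ one gets the bound $\exp(-(\log\log n)\log 3)=(\log n)^{-\log 3}$ for the martingale part, and $\log 3>1$ is exactly what makes $\sum_n n^{-1}(\log n)^{-\log 3}$ converge. Neither the truncation nor any exponential inequality appears in your proposal, and without them the final summation step cannot work.

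Two further steps of your plan are also flawed, though more easily repaired. First, the reduction to a single monotonic function loses uniformity: from $\max_k|S_k(f)|\le\sum_\ell |a_\ell|\max_k|S_k(f_\ell)|$ and $\sum_\ell|a_\ell|\le1$ one can only conclude $\p(\max_k|S_k(f)|>t)\le\sum_{\ell=1}^L\p(\max_k|S_k(f_\ell)|>t)$, so the series bound becomes $LK$ rather than $K$, and it blows up with $L$; but a bound uniform over the approximants is indispensable for the closure step. This is why the paper never reduces to single functions: the truncation and all estimates are carried out directly on the convex combination $\sum_\ell a_\ell f_\ell$, with constants controlled through $\sum_\ell|a_\ell|\le 1$ and the uniform $\mathbb{L}^2$ bound $M$ (see \eqref{but2ter} and the quantities $\sum_\ell|a_\ell|\E(f_\ell(Y_0)^4{\bf 1}_{|f_\ell(Y_0)|\le Mn^{1/2}})$ appearing in Lemma~\ref{lem4}). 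Second, the passage to the $\mathbb{L}^1(P_{Y_0})$-closure cannot be done by applying \eqref{but2} to the differences $f-f_N$: these are small in $\mathbb{L}^1$ but in general only lie in $\Monm_2(2M,P_{Y_0})$, so the resulting threshold $3C(2M)\sqrt{n\log\log n}$ does not shrink as $N\to\infty$ and nothing can be concluded. The paper instead fixes $n$, uses that $S_n(f_L)\to S_n(f)$ almost surely along a subsequence, and passes to the limit in the series via Beppo-Levi and Fatou's lemma applied to a continuous function sandwiched between the indicators ${\bf 1}_{x>3C+k^{-1}}$ and ${\bf 1}_{x>3C}$; it is precisely here that the uniformity in $L$ of \eqref{but2bis} is used.
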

\begin{proof}
Let $f \in \Monm_2(M,P_{Y_0})$. By definition of
$\Monm_2(M,P_{Y_0})$, there exists $f_L= \sum_{\ell=1}^L a_{\ell, L}
g_{\ell, L}$ with $g_{\ell, L}$ belonging to $\Mon_2(M, P_{Y_0})$ and
$\sum_{\ell=1}^L |a_{\ell, L}| \leq 1$, and such that $f_L$ converges
in ${\mathbb L}^1(P_{Y_0})$ to $f$. It follows that $X_{i,L}=f_L(Y_i)
- \E ( f_L(Y_i))$ converges in ${\mathbb L}^1$ to $X_i$ as $L$ tends
to infinity. Extracting a subsequence if necessary, one may also
assume that the convergence holds almost surely.

Hence, for any fixed $n$, $S_{n}(f_L)=\sum_{k=1}^n X_{k,L}$ converges almost surely
and in ${\mathbb L}^1$ to $S_n(f)$. Assume that one can prove that,
for any positive integer $L$,
\begin{equation} \label{but2bis}
\sum_{n >0}\frac{1}{n} \p \Big( \max_{1 \leq k \leq n} |S_{k}(f_L)| > 3CM
\sqrt{ n \log \log n}\Big) < K \, ,
\end{equation}
for some positive constant $K$ not depending on $L$. Let us explain
why \eqref{but2bis} implies \eqref{but2}. Let $Z_n= \max_{1 \leq k
\leq n} |S_{k}(f)|/\sqrt{ M^2 n \log \log n}$. By Beppo-Levi,
\begin{equation}\label{BeppoLevy}
\sum_{n >0}\frac{1}{n} \p \Big( \max_{1 \leq k \leq n} |S_k(f)| > 3C
M\sqrt{ n \log \log n}\Big) =
\lim_{k \rightarrow \infty} \E \Big (
\sum_{n >0}\frac{1}{n} {\bf 1}_{ Z_n > 3C+k^{-1}}
\Big).
\end{equation}
Let $h_k$ be a continuous function from ${\mathbb R}$ to $[0, 1]$, such that
$h_k(x)=1$ if $x >3C + k^{-1}$ and $h_k(x)=0$ if $x < 3C$. Let
$Z_{n,L}= \max_{1 \leq k \leq n} |S_{k, L}|/\sqrt{ M^2 n \log \log n}$.
By Fatou's lemma,
\begin{multline}\label{Fatou}
\E \Big (
\sum_{n >0}\frac{1}{n} {\bf 1}_{ Z_n > 3C+k^{-1}}
\Big)\leq \E \Big (
\sum_{n >0}\frac{1}{n} h_k(Z_n)
\Big) \\
\leq \liminf_{L \rightarrow \infty}
\E \Big (
\sum_{n >0}\frac{1}{n} h_k(Z_{n, L})
\Big)\leq \liminf_{L \rightarrow \infty}
\E \Big (
\sum_{n >0}\frac{1}{n} {\bf 1}_{Z_{n, L}>3C}
\Big)\, .
\end{multline}
From \eqref{but2bis}, \eqref{BeppoLevy} and \eqref{Fatou}, we infer
that
\[
\sum_{n >0}\frac{1}{n} \p \Bigl( \max_{1 \leq k \leq n} |S_k(f)| > 3C
\sqrt{M(f)n \log \log n}\Bigr)
\leq  \liminf_{L \rightarrow \infty}
\E \Bigl (\sum_{n >0}\frac{1}{n} {\bf 1}_{Z_{n, L}>3C} \Bigr)
\leq  K \, ,
\]
and \eqref{but2} follows.

Hence, it remains to prove \eqref{but2bis}, or more generally that:
if $f= \sum_{\ell=1}^L a_{\ell} f_{\ell}$ with $f_{\ell}$ belonging
to $\Mon_2(M, P_{Y_0})$ and $\sum_{\ell=1}^L |a_{\ell}| \leq 1$, then
\begin{equation} \label{but2ter}
\sum_{n >0}\frac{1}{n} \p \Big( \max_{1 \leq k \leq n} |S_{k}(f)| > 3C M
\sqrt{ n \log \log n}\Big) < K \, ,
\end{equation}
for some positive constant $K$ not depending on $f$.

We now prove \eqref{but2ter}. We will need to truncate the functions.
It turns out that the optimal truncation level is at
$\sqrt{n}/\sqrt{\log \log n}$: the large part can then be controlled
by a simple $\mathbb{L}^1$ estimate, while the truncated part can be
estimated thanks to a maximal inequality of Pinelis (1994) (after a
reduction to a martingale). Let
%$b = \sqrt {8 } M$, and let
$ g_n(x)
= x {\bf 1}_{|x| \leq M n^{1/2}/\sqrt{ \log \log n}} $.
%(x \wedge b n^{1/2}/\sqrt{ \log \log n}) \vee (-b n^{1/2}/\sqrt{ \log \log n})\, .
For
any $i \geq 0$, we first define
\[
 X_{i,n}'=\sum_{\ell=1}^L a_{\ell} \, g_n \circ
 f_{\ell}(Y_i) -\sum_{\ell =1}^L a_{\ell}
  \E ( g_n \circ f_{\ell}(Y_i)) \quad \text{and} \quad  X_{i,n}''=X_i - X_{i,n}' \, .
\]

Let
\[ d_{i,n} = \sum_{ j \geq i} \E(X'_{j,n}|\mathcal{F}_i)- \E(X'_{j,n}|\mathcal{F}_{i-1}) \quad \text{and}
\quad M_{k,n}= \sum_{i=1}^k d_{i,n} \, .
\]
The following decomposition holds
\[
X_0 = d_{0,n} + \sum_{k \geq 0} \E ( X'_{k,n} |{\mathcal F}_{-1}) -
\sum_{k \geq 0} \E ( X'_{k+1,n} |{\mathcal F}_0) + X''_{0,n}\, .
\]
Let $h_n = \sum_{k \geq 0} \E ( X'_{k,n} |{\mathcal F}_{-1})$. One
can write
\[
X_i = d_{0,n} \circ \theta^i +h_n\circ \theta^i - h_n\circ
\theta^{i+1} + X''_{0,n} \circ \theta^i
\, ,
\]
and consequently
\[
S_k = M_{k,n} + h_n\circ \theta - h_n\circ \theta^{k+1} + S_{k,n}'' \, ,
\]
with $S_{k,n}'' = \sum_{i=1}^kX''_{0,n} \circ \theta^i$. Hence, for any $x >0$,
\begin{multline} \label{firstdecomposition}
\p ( \max_{1 \leq k \leq n} |S_k| \geq 3x) \leq  \p ( \max_{1 \leq k \leq n} |M_{k,n}| \geq x) \\+
 \p ( \max_{1 \leq k \leq n} |h_n\circ \theta -
  h_n\circ \theta^{k+1}| \geq x)
  + \p ( \max_{1 \leq k \leq n} |S''_{k,n}| \geq x)\, .
\end{multline}

Let us first control the coboundary term. We have
\[
\|\E ( X'_{k,n} |{\mathcal F}_{0}) \|_\infty
\leq \sum_{\ell =1}^L |a_{\ell}| \|\E(g_n \circ
 f_{\ell}(Y_k)|{\mathcal F}_0)-\E(g_n \circ
 f_{\ell}(Y_k))\|_\infty \, .
\]
Applying Lemma \ref{covaphi}, $\|\E(g_n \circ
 f_{\ell}(Y_k)|{\mathcal F}_0)-\E(g_n \circ
 f_{\ell}(Y_k))\|_\infty \leq 4M\phi_{1, {\bf Y}}(k) \sqrt{n}/\sqrt{\log \log n}$. It follows that
\[
  \|h_n\|_\infty \leq 4M \Big(\sum_{k=1}^\infty \phi_{1, {\bf Y}}(k)\Big) \frac{\sqrt n}{\sqrt{\log \log n}} \, .
\]
Hence,  there exists a positive constant $K_1$ such that
\begin{equation} \label{cob}
\sum_{n >0}\frac{1}{n} \p \Big( \max_{1 \leq k \leq n} |h_n\circ \theta - h_n\circ \theta^{k+1}| \geq  C M\sqrt{ n \log \log n}\Big) < K_1 \, .
\end{equation}

Let us now control the large part $X''$. We will prove the existence
of a positive constant $K_2$ such that
\begin{equation}\label{Snseconde}
\sum_{n>0} \frac{1}{n}
\p \Big( \max_{1 \leq k \leq n} |S''_{k,n}| \geq
C M \sqrt{  n \log \log n}\Big) < K_2
\, .
\end{equation}
We shall use the following lemma, whose proof is straightforward:
\begin{lma} \label{lem2}
\[
\p \Big( \max_{1 \leq k \leq n} |S''_{k,n}|  \geq  x\Big)
   \leq \frac{2n}{x}
 \sum_{\ell=1}^L |a_\ell|
 \E(|f_\ell(Y_0)|{ \bf 1}_{|f_\ell(Y_0)| > M n^{1/2}/\sqrt{ \log \log n}} ) \, .
\]
\end{lma}

Applying Lemma~\ref{lem2} with $x=CM \sqrt{n \log \log n}$, we obtain
that
\begin{multline*}
 \p \Big( \max_{1 \leq k \leq n} |S''_{k,n}|
  \geq  C M\sqrt{ n \log \log n}\Big)\\
   \leq \frac{2 n}{ C M \sqrt{  n \log \log n}}
 \sum_{\ell=1}^L |a_\ell|
 \E(|f_\ell(Y_0)|{ \bf 1}_{|f_\ell(Y_0)| > M n^{1/2}/\sqrt{ \log \log n}}).
\end{multline*}
Now, via Fubini, there exists a positive constant $A_1$ such that
\begin{align*}
\sum_{n >0}\frac{1}{n} \frac{n}{\sqrt{n \log \log n}}
\E(| f_\ell(Y_0)|{ \bf 1}_{|f_\ell(Y_0)| >M  n^{1/2}/\sqrt{ \log \log n}} )
< A_1 \|f_\ell(Y_0)\|_2^2  \leq
A_1 M^2\, ,
\end{align*}
and \eqref{Snseconde} follows with $K_2=(2A_1 M)/C$.

Next, we turn to the main term, that is the martingale term. We will prove
that there exists a positive constant $K_3$ such that
\begin{equation}\label{mainpart}
\sum_{n >0}\frac{1}{n} \p \Big( \sup_{1 \leq j \leq n} |M_{j,n}| \geq
C M \sqrt{n \log \log n}\Big) < K_3 \, .
\end{equation}
The main contribution will be controlled through the following
maximal inequality.
\begin{lma}\label{lem3}
Let
\[
c_n=\frac{8M \sqrt n}{\sqrt{\log \log n}} \sum_{k \geq 0}
\phi_{1, {\bf Y}}^{1/2}(k)\, .
\]
The following upper bound holds: for any positive reals $x$ and $y$,
\[
\p \Big ( \sup_{1 \leq j \leq n} |M_{j,n}| \geq x ,
  \sum_{j=1}^n \E ( d_{j,n}^2 | {\mathcal F}_{j-1} ) \leq 2y \Big)
  \leq
  2 \exp \left( -\frac {2y}{c_n^2} \, h
  \Big (  \frac{xc_n}{2y}  \Big )\right)  \, ,
\]
where $h(u)= (1+u) \ln (1+u) -u \geq u \ln (1+u) /2$.
\end{lma}
\begin{proof}
Note first that
\[
\|d_{0,n}\|_\infty \leq 2\sum_{k\geq 0} \|\E(X'_{k,n}|{\mathcal F}_0)\|_\infty
\leq 2 \sum_{k \geq 0} \sum_{\ell =1}^L |a_\ell|
\|\E(g_n \circ
 f_{\ell}(Y_k)|{\mathcal F}_0)-\E(g_n \circ
 f_{\ell}(Y_k))\|_\infty \, .
\]
Now, applying Lemma \ref{covaphi},
\[ \|\E(g_n \circ
 f_{\ell}(Y_k)|{\mathcal F}_0)-\E(g_n \circ
 f_{\ell}(Y_k))\|_\infty \leq \frac{4M \sqrt n}{\sqrt{\log \log n}}
 \phi_{1, {\bf Y}}(k) \, ,
\]
so that
\[
\|d_{0,n}\|_\infty \leq \frac{8M \sqrt n}{\sqrt{\log \log n}} \Big(\sum_{k\geq 0} \phi_{1, {\bf Y}}(k)\Big)\leq c_n \, .
\]
%\[
%\|d_{j,n}\|_{\infty} \leq \frac{2b \sqrt n}{\sqrt{\log \log n}} \sum_{k \geq 0} \phi(k) %\leq \frac{2b \sqrt n}{\sqrt{\log \log n}} \sum_{k \geq 0} \phi^{1/2}(k) : = c_n \, .
%\]
Proposition A.1 in Dedecker, Gou\"ezel and Merlev\`ede (2010) shows that
any sequence of martingale differences $d_j$ which is bounded by a
constant $c$ satisfies
  \[
  \p \Big ( \sup_{1 \leq j \leq n} |M_{j}| \geq x ,
  \sum_{j=1}^n \E ( d_{j}^2 | {\mathcal F}_{j-1} ) \leq 2y \Big)
  \leq
  2 \exp \left( -\frac{2y}{c^2} \, h
  \Big (  \frac{xc}{2y}  \Big )\right)  \, .
\]
The sequence $d_j=d_{j,n}$ satisfies the assumptions of this
proposition for $c=c_n$. Therefore, Lemma \ref{lem3} follows.
\end{proof}

Notice that
 \[\sum_{j=1}^n \E ( d_{j,n}^2)=n\E ( d_{1,n}^2)\leq 4 n
 \Big \Vert \sum_{j \geq 0} \E ( X_{j,n}'|{\mathcal F}_0) \Big \Vert_2^2 \, .
%\leq 4 \times 8^2 \times n  \big ( \sum_{k \geq 0 }
%\phi^{1/2}(k) \big )^2 \|X_0\|_2^2 \, ,
\]
Now,
\[
\|\E ( X'_{k,n} |{\mathcal F}_{0}) \|_2
\leq \sum_{\ell =1}^L |a_{\ell}| \|\E(g_n \circ
 f_{\ell}(Y_k)|{\mathcal F}_0)-\E(g_n \circ
 f_{\ell}(Y_k))\|_2 \, .
\]
Applying Lemma \ref{covaphi}, $\|\E(g_n \circ
 f_{\ell}(Y_k)|{\mathcal F}_0)-\E(g_n \circ
 f_{\ell}(Y_k))\|_2 \leq 2 \sqrt 2\phi_{1, {\bf Y}}^{1/2}(k)\|f_\ell(Y_0)\|_2$.
  It follows that
\[
 \|\E ( X'_{k,n} |{\mathcal F}_{0}) \|_2 \leq 2 \sqrt 2
 \phi_{1, {\bf Y}}^{1/2}(k) M\,
\]
and consequently
\[
\sum_{j=1}^n \E ( d_{j,n}^2)
\leq 32 \,  n  \Big ( \sum_{k \geq 0 }
\phi_{1, {\bf Y}}^{1/2}(k) \Big )^2 M^2\, .
\]
We apply Lemma~\ref{lem3} with
\begin{equation}\label{yn}
y=y_n = 32\,  n  \Big ( \sum_{k \geq 0 } \phi_{1, {\bf Y}}^{1/2}(k) \Big )^2 M^2 \, .
\end{equation}
Letting $x_n = C M \sqrt{ n \log \log n}$, we have
\begin{multline*}
\sum_{n >0}\frac{1}{n} \p \Big( \sup_{1 \leq j \leq n} |M_{j,n}| \geq  x_n ,
  \sum_{j=1}^n \E ( d_{j,n}^2 | {\mathcal F}_{j-1} ) \leq 2y_n \Big)
  \\ \leq 2\sum_{n >0}\frac{1}{n} \exp \Big ( - \frac{x_n}{2c_n} \ln (1 + x_n c_n /(2y_n) )\Big)
 \, .
\end{multline*}
Now, the choice of $C$  imply that
$
 x_n = 4y_n/c_n
$
and $2y_n = c_n^2 (\log \log n)$. It follows that
\[
\sum_{n >0}\frac{1}{n} \exp \Big ( - \frac{x_n}{2c_n} \ln (1 + x_n c_n /(2y_n) )\Big) = \sum_{n >0}\frac{1}{n} \exp \big ( - (\log \log n ) \log 3\big) < \infty \, .
\]

To prove \eqref{mainpart}, it remains to prove that there exists a
positive constant $K_4$ such that
\begin{equation*}
\sum_{n \geq 1 } \frac{1}{n} \p \Big (   \sum_{j=1}^n   \E(d^2_{j,n}|{\mathcal F}_{j-1})   \geq 2y_n \Big )  < K_4 \, .
\end{equation*}
Since  $\sum_{j=1}^n \E ( d_{j,n}^2) \leq y_n$, it suffices to prove
that
\begin{equation} \label{martin}
\sum_{n \geq 1 } \frac{1}{n} \p \Big ( \Big |  \sum_{j=1}^n (  \E(d^2_{j,n}|{\mathcal F}_{j-1}) -\E(d_{j,n}^2))  \Big |  \geq y_n \Big )  <
K_4 \, .
\end{equation}
To prove \eqref{martin}, we shall use the following lemma:

\begin{lma}\label{lem4} If \eqref{DDM} holds,
 there exists a positive constant $C_2(\phi)$ such that for any $y >0$,
\[
\p \Big ( \Big |  \sum_{j=1}^n (  \E(d^2_{j,n}|{\mathcal F}_{j-1}) -\E(d_{j,n}^2))  \Big |  \geq y \Big )
\leq \frac{n C_2(\phi)}{y^2} \sum_{\ell=1}^L |a_{\ell}|
\E(f_\ell (Y_0)^4 {\bf 1}_{|f_\ell (Y_0)| \leq  M n^{1/2}})\, .
\]
\end{lma}
Before proving Lemma~\ref{lem4}, let us complete the proof of
\eqref{martin}, \eqref{mainpart} and \eqref{but2}. Since $y_n$ is
given by \eqref{yn}, we infer from Lemma~\ref{lem4} that there exists
a positive constant $C_3(\phi)$ such that
\begin{multline*}
\sum_{n >0}\frac{1}{n} \p \Big ( \Big |  \sum_{j=1}^n (  \E(d^2_{j,n}|{\mathcal F}_{j-1}) -\E(d_{j,n}^2))  \Big |  \geq y_n \Big ) \\ \leq
\frac{C_3(\phi)}{M^4} \sum_{\ell=1}^L |a_\ell| \sum_{n >0}\frac{1}{n^2}
\E(f_\ell (Y_0)^4 {\bf 1}_{|f_\ell (Y_0)| \leq  M n^{1/2}})\, .
\end{multline*}
By Fubini, the last sum in this equation is bounded by $4 M^2
\|f_\ell(Y_0)\|_2^2  \leq 4 M^4$. Therefore, \eqref{martin} follows
with $K_4= 4 C_3(\phi)$. This completes the proof of
\eqref{mainpart}. Now, the proof of \eqref{but2ter} follows from
\eqref{firstdecomposition}, \eqref{cob}, \eqref{Snseconde} and
\eqref{mainpart}. The inequality \eqref{but2} of
Proposition~\ref{phi} is proved.
\end{proof}

It remains to prove Lemma~\ref{lem4}.

\medskip

\begin{proof}[Proof of Lemma~\ref{lem4}.]
In a sense, the contribution coming from Lemma~\ref{lem4} is less
essential than the contribution we estimated thanks to the maximal
inequality. However, it is rather technical to estimate. To handle
this term, we will argue in the other direction, and go from the
martingale to the partial sums of the original random variables.

We apply  Theorem 3 in Wu and Zhao (2008): for any $q \in (1,2]$
there exists a positive constant $C_q$ such that
\begin{equation*}
\E \Big(\Big |  \sum_{j=1}^n (  \E(d^2_{j,n}|{\mathcal F}_{j-1}) -\E(d_{j,n}^2))  \Big |^q \Big)
\leq C_q n \E (|d_{1,n}|^{2q})  + C_q n \Delta^*_{n,q}
\end{equation*}
where
\[
\Delta^*_{n,q} = \Big ( \sum_{k=1}^{n} \frac{1}{k^{1 + 1 /q}} \|\E(M_{k,n}^2|{\mathcal F}_{0}) - \E(M_{k,n}^2) \|_q \Big )^{q} \, .
\]
Hence, by Markov's inequality with $q=2$, one has
\[
\p \Big ( \Big |  \sum_{j=1}^n (  \E(d^2_{j,n}|{\mathcal F}_{j-1}) -\E(d_{j,n}^2))  \Big |  \geq y \Big )
\leq \frac{C_2n}{y^2}
\Big( \E (|d_{1,n}|^{4})  +  \Delta^*_{n,2}\Big) \, .
\]
Note first that
\[
\E(|d_{1,n}|^{4}) \leq 16  \Big(
 \sum_{j \geq 0} \Vert \E ( X_{j,n}'|{\mathcal F}_0)  \Vert_4 \Big)^4 \, .
 %\leq 16
%\Big(8 \sum_{k>0} \phi_{1, {\bf Y}} (k)^{3/4}\Big)^4
%\sum_{\ell =1}^L |a_{\ell}|
%\E(f_\ell (Y_0)^4 {\bf 1}_{|f_\ell (Y_0)| \leq  M n^{1/2}})\, .
\]
Now
\[
\|\E ( X'_{k,n} |{\mathcal F}_{0}) \|_4
\leq \sum_{\ell =1}^L |a_{\ell}| \|\E(g_n \circ
 f_{\ell}(Y_k)|{\mathcal F}_0)-\E(g_n \circ
 f_{\ell}(Y_k))\|_4 \, .
\]
Applying Lemma \ref{covaphi}, $\|\E(g_n \circ
 f_{\ell}(Y_k)|{\mathcal F}_0)-\E(g_n \circ
 f_{\ell}(Y_k))\|_4 \leq 2(2\phi_{1, {\bf Y}}(k))^{3/4}\|g_n\circ f_\ell(Y_0)\|_4$. It follows that
 \[
\E(|d_{1,n}|^{4})
 \leq
 2^{11} \Big( \sum_{k>0} \phi_{1, {\bf Y}} (k)^{3/4}\Big)^4
\Big(\sum_{\ell =1}^L |a_{\ell}| \|g_n\circ f_\ell(Y_0)\|_4\Big)^4\, .
\]
Applying Jensen's inequality,
\begin{equation}\label{lem1}
\E(|d_{1,n}|^{4})
 \leq 2^{11}
\Big( \sum_{k>0} \phi_{1, {\bf Y}} (k)^{3/4}\Big)^4
\sum_{\ell =1}^L |a_{\ell}|
\E(f_\ell (Y_0)^4 {\bf 1}_{|f_\ell (Y_0)| \leq  M n^{1/2}})\, .
\end{equation}

Now, letting $S_{k,n}' = \sum_{i=1}^k X'_{i,n}$, one has $
M_{k,n}=S_{k,n}' -R_{k,n}$, with
\[
R_{k,n} = \sum_{i \geq 1} \E (X'_{i,n} |{\mathcal F}_0) - \sum_{i \geq k+1} \E (X'_{i,n} |{\mathcal F}_k) \, .
\]
Hence
\begin{multline*}
\Delta^*_{n,2}  \leq 3 \Big ( \sum_{k=1}^{n} \frac{1}{k^{3/2}} \|\E(S_{k,n}'^2 |{\mathcal F}_{0}) - \E(S_{k,n}'^2) \|_2 \Big )^{2} +
3  \Big ( \sum_{k=1}^{n} \frac{1}{k^{3/2}} \|R_{k,n}^2 \|_2 \Big )^{2}\\
+12 \Big ( \sum_{k=1}^{n} \frac{1}{k^{3/2}} \|\E(S_{k,n}'R_{k,n}|{\mathcal F}_{0}) - \E(S_{k,n}'R_{k,n}) \|_2 \Big )^{2}  \, .
\end{multline*}
Arguing as for the proof of  \eqref{lem1}, we obtain that
\begin{align*}
\| R_{k,n}^2\|_2 &
\leq 4 \Big \Vert \sum_{i \geq 1} \E (X'_{i,n} |{\mathcal F}_0) \Big \Vert_4^2
\\&
\leq 32\sqrt 2 \Big( \sum_{k>0} \phi_{1, {\bf Y}} (k)^{3/4}\Big)^2
\Big(\sum_{\ell=1}^L |a_{\ell}| \E(f_\ell (Y_0)^4 {\bf 1}_{|f_\ell
(Y_0)| \leq  M n^{1/2}})\Big)^{1/2}\, .
\end{align*}

From the proof of Corollary 2.1 in Dedecker, Doukhan and Merlev\`ede
(2011), for any $\gamma\in (0,1]$ (to be chosen later), there exists
a positive constant $B$ such that
\begin{equation}\label{borneDDM}
\Big(\sum_{k=1}^{n} \frac{1}{k^{3/2}} \|\E(S_{k,n}'^2 |{\mathcal F}_{0}) - \E(S_{k,n}'^2) \|_2 \Big)^2 \leq B I_1^2 +B I_2^2
\end{equation}
where
\begin{align*}
I_1 &= \sum_{m>0} \frac{m^{\gamma}}{m^{1/2}} \sup_{i \geq j \geq m}
\|\E(X'_{i,n} X'_{j,n}|{\mathcal F}_0)-\E(X_{i,n}'X_{j,n}')\|_2 \\
I_2 &= \Big(\sum_{k>0} \frac{k^{1/(2\gamma)}}{k^{1/4}}
\|\E(X_{k,n}'|{\mathcal F}_0)\|_4\Big)^2\, .
\end{align*}
Arguing as for the proof of \eqref{lem1}, we obtain that
\begin{equation} \label{I2}
I_2 \leq 8 \sqrt 2 \Big(  \sum_{k>0} \frac{k^{1/(2\gamma)}}{k^{1/4}}
\phi_{1, {\bf Y}}(k)^{3/4} \Big)^2
\Big(\sum_{\ell =1}^L |a_{\ell}|
\E(f_\ell (Y_0)^4 {\bf 1}_{|f_\ell (Y_0)| \leq  M n^{1/2}})\Big)^{1/2} \, .
\end{equation}
To bound $I_1$, note that
\begin{multline*}
 \|\E(X'_{i,n} X'_{j,n}|{\mathcal F}_0)-\E(X_{i,n}'X_{j,n}')\|_2
 \\
 \leq \sum_{k=1}^L \sum_{\ell=1}^L |a_k||a_\ell| \|
 \E((g_n\circ f_k (Y_i))^{(0)} (g_n \circ f_\ell (Y_j))^{(0)}|{\mathcal F}_0)-
 \E((g_n\circ f_k (Y_i))^{(0)} (g_n \circ f_\ell (Y_j))^{(0)})\|_2\, .
\end{multline*}
Applying Lemma \ref{covaphi}, for $i \geq j \geq m$,
\begin{multline*}
\|\E((g_n\circ f_k (Y_i))^{(0)} (g_n \circ f_\ell (Y_j))^{(0)}|{\mathcal F}_0)-
 \E((g_n\circ f_k (Y_i))^{(0)} (g_n \circ f_\ell (Y_j))^{(0)})\|_2 \\
 \leq 16  \phi_{2, {\bf Y}}(m)^{1/2} \|g_n \circ f_k(Y_0)\|_4
 \|g_n \circ f_\ell (Y_0)\|_4 \, .
\end{multline*}
It follows that
\begin{equation}\label{I1}
 I_1 \leq \Big(16 \sum_{m>0} \frac{m^{\gamma}}{m^{1/2}} \phi_{2, {\bf Y}}(m)^{1/2}\Big)
 \Big(\sum_{\ell =1}^L |a_{\ell}|
\E(f_\ell (Y_0)^4 {\bf 1}_{|f_\ell (Y_0)| \leq  M n^{1/2}})\Big)^{1/2}\, .
\end{equation}
Let $\gamma=1/\sqrt 3$. If the condition \eqref{DDM} holds, then
\[
\sum_{k>0} \frac{k^{{\sqrt 3}/2}}{k^{1/4}}
\phi_{1, {\bf Y}}(k)^{3/4}< \infty \quad \text{and} \quad
\sum_{m>0} \frac{m^{1/\sqrt 3}}{m^{1/2}} \phi_{2, {\bf Y}}(m)^{1/2} < \infty.
\]
To see that the convergence of the second series implies the
convergence of the first series, it suffices to note that $\phi_{1,
{\bf Y}}(n)\leq \phi_{2, {\bf Y}}(n)$ and that, since $\phi_{2, {\bf
Y}}(n)$ is nonincreasing, $\phi_{2, {\bf Y}}(n)=o(n^{-(2+\sqrt
3)/\sqrt 3})$.

We infer  from \eqref{borneDDM}, \eqref{I2} and \eqref{I1} that, if
\eqref{DDM} holds, there exists a positive constant $C_4(\phi)$ such
that
\begin{equation}\label{borne}
\Big(\sum_{k=1}^{n} \frac{1}{k^{3/2}} \|\E(S_{k,n}'^2 |{\mathcal F}_{0}) - \E(S_{k,n}'^2) \|_2 \Big)^2 \leq C_4(\phi)
\sum_{\ell =1}^L |a_{\ell}|
\E(f_\ell (Y_0)^4 {\bf 1}_{|f_\ell (Y_0)| \leq  M n^{1/2}})\, .
\end{equation}

Let us consider now the term
\begin{equation*}
 \Big ( \sum_{k=1}^{n} \frac{1}{k^{3/2}} \|\E(S_{k,n}'R_{k,n}|{\mathcal F}_{0}) - \E(S_{k,n}'R_{k,n}) \|_2 \Big )^{2}  \, .
\end{equation*}
As for the proof of \eqref{lem1}, one has
\begin{align*}
\Big \|\E(S_{k,n}'|{\mathcal F}_{0})
\sum_{i \geq 1} \E (X'_{i,n} |{\mathcal F}_0) \Big \|_2^2
& \leq  \Big ( \sum_{i \geq 1}
\Vert \E (X'_{i,n} |{\mathcal F}_0) \|_4 \Big )^2 \\
& \leq  8 \sqrt 2 \Big ( \sum_{i \geq 1} \phi_{1, {\bf Y}}^{3/4}(i) \Big )^2
\Big(\sum_{\ell =1}^L |a_{\ell}|
\E(f_\ell (Y_0)^4 {\bf 1}_{|f_\ell (Y_0)| \leq  M n^{1/2}})\Big)^{1/2}\, .
\end{align*}
Next, we need to bound
\begin{equation*}
  \Big ( \sum_{k=1}^{n} \frac{1}{k^{3/2}} \Big \|\E(S_{k,n}'\sum_{i \geq k+1}
  \E (X'_{i,n} |{\mathcal F}_k)|{\mathcal F}_{0}) - \E(S_{k,n}'\sum_{i \geq k+1} \E (X'_{i,n} |{\mathcal F}_k)) \Big \|_2 \Big )^{2}  \, .
\end{equation*}
First, we see that
\begin{equation*}
\sum_{i \geq k+1} \E (X'_{i,n} |{\mathcal F}_k) =  \E (S_{2k,n}'- S_{k,n}' |{\mathcal F}_k)  + \sum_{j \geq 2k+1} \E (X'_{j,n} |{\mathcal F}_k)   \, .
\end{equation*}
Since $S_{k,n}'$ is ${\mathcal F}_k$-measurable, we get that
\begin{multline*}
\Vert \E ( S'_{k,n} \E (S_{2k,n}'- S_{k,n}'
|{\mathcal F}_k)|{\mathcal F_0}) - \E ( S'_{k,n} \E (S_{2k,n}'- S_{k,n}'  |{\mathcal F}_k)) \Vert_{2} \\
= \Vert  \E (S'_{k,n}  (S_{2k,n}'- S_{k,n}' ) |{\mathcal F_0}) - \E  ( S'_{k,n} (S_{2k,n}'- S_{k,n}'  ) )\Vert_{2} \, .
\end{multline*}
Next using the identity $2ab = (a+b)^2 - a^2 -b^2$ and the stationarity, we obtain that
\begin{multline*}
2  \Vert \E ( S'_{k,n} \E (S'_{2k,n}- S_{k,n}' |{\mathcal F}_k)|{\mathcal F_0}) - \E ( S'_{k,n} \E (S'_{2k,n}- S_{k,n}'  |{\mathcal F}_k)) \Vert_{2}  \\
 \quad \leq \Vert  \E (S_{2k,n}'^2|{\mathcal F_0}) - \E ( S_{2k,n}'^2 ) \Vert_{2} + 2 \Vert  \E (S_{k,n}'^2|{\mathcal F_0}) -
\E ( S_{k,n}'^2 ) \Vert_{2} \, ,
\end{multline*}
which combined with \eqref{borne} implies that
\begin{multline*}   \Big ( \sum_{k=1}^{n} \frac{1}{k^{3/2}}\Vert \E ( S'_{k,n} \E (S'_{2k,n}- S_{k,n}' |{\mathcal F}_k)|{\mathcal F_0}) - \E ( S'_{k,n} \E (S'_{2k,n}- S_{k,n}'  |{\mathcal F}_k)) \Vert_{2} \Big )^{2} \\ \leq
6 \, C_4(\phi)
\sum_{\ell =1}^L |a_{\ell}|
\E(f_\ell (Y_0)^4 {\bf 1}_{|f_\ell (Y_0)| \leq  b n^{1/2}}) \, .
\end{multline*}
It remains to bound
\begin{equation*} \label{but7}
 \Big ( \sum_{k=1}^{n} \frac{1}{k^{3/2}}
 \Big \Vert \E \Big ( S'_{k,n} \sum_{j \geq 2k+1} \E (X'_{j,n} |{\mathcal F}_k)  \Big |{\mathcal F_0}\Big ) \Big \Vert_{2} \Big )^{2}  \, .
\end{equation*}
By stationarity,
\begin{equation*} \label{bound}
\sum_{j \geq 2k+1} \Vert \E (S'_{k,n} \E ( X'_{j,n}|{\mathcal F}_k)
 ) | {\mathcal F}_0) \Vert_{2}
 \leq   k \sum_{j \geq k+1} \Vert X'_{0,n} \E ( X'_{j,n}|{\mathcal F}_0 ) \Vert_{2} \, .
\end{equation*}
Now, as for the proof of \eqref{lem1},
\begin{align*}
\sum_{j \geq k+1}\Vert X'_{0,n} \E ( X'_{j,n}|{\mathcal F}_0 ) \Vert_{2} &\leq
\Vert  X'_{0,n}\Vert_4\sum_{j \geq k+1} \Vert \E ( X'_{j,n} |{\mathcal F}_0) \Vert_{4}
\\
&\leq 2 \Big(2\sum_{j \geq k+1} (2\phi_{1, {\bf Y}}(j))^{3/4}\Big) \Big(\sum_{\ell =1}^L |a_{\ell}|
\E(f_\ell (Y_0)^4 {\bf 1}_{|f_\ell (Y_0)| \leq  b n^{1/2}})\Big)^{1/2}\, ,
\end{align*}
and consequently, there exists a positive constant $D$ such that
\begin{multline*}
 \Big ( \sum_{k=1}^{n} \frac{1}{k^{3/2}}
 \Big \Vert \E \Big ( S'_{k,n} \sum_{j \geq 2k+1} \E (X'_{j,n} |{\mathcal F}_k)  \Big |{\mathcal F_0}\Big) \Big \Vert_{2} \Big )^{2} \\
 \leq
 D \Big(\sum_{j \geq 2} j^{1/2} \phi_{1, {\bf Y}}(j)^{3/4}\Big)^2
 \sum_{\ell =1}^L |a_{\ell}| \E(f_\ell (Y_0)^4 {\bf 1}_{|f_\ell (Y_0)| \leq  M n^{1/2}})\, .
\end{multline*}
The lemma is proved.
\end{proof}

\section{Proof of Theorem~\ref{ASmap1} on uniformly expanding maps}
\label{secproofasmap1}

Let $(Y_i)_{i \geq 0}$ be the stationary Markov chain with transition
kernel $K$ corresponding to the iteration of the inverse branches of
$T$, and let $X_n=f(Y_n)-\nu (f)$. Concerning Item 1 in
Theorem~\ref{ASmap1}, it is well known that it is equivalent to prove
it for the iteration of the map or of the Markov chain, since the
distributions are the same (see for instance the proof of Theorem 2.1
in Dedecker and Merlev\`ede (2009)). Therefore, it is enough to show
that the process
\[
   \Big \{\frac{1}{\sqrt n} \sum_{i=1}^{[nt]} X_i
  , \ t \in [0,1]\Big \}
\]
converges in distribution in the Skorokhod topology to $\sigma W$,
where $W$ is a standard Wiener process. Now, as shown by Heyde
(1975), this property as well as the absolute convergence of the
series \eqref{var} will be true provided that Gordin's condition
(1969) holds, that is
\begin{equation}\label{Gor}
  \sum_{n=0}^\infty \| K^n(f)-\nu(f)\|_{\mathbb{L}^2(\nu)} < \infty \, .
\end{equation}

By definition of $\Monm_2(M,\nu)$, there exists a sequence of
functions $f_L= \sum_{\ell=1}^L a_{\ell, L} g_{\ell, L}$ with
$g_{\ell, L}$ belonging to $\Mon_2(M, \nu)$ and $\sum_{\ell=1}^L
|a_{\ell, L}| \leq 1$, such that $f_L$ converges in ${\mathbb
L}^1(\nu)$ to $f$. It follows that, for any nonnegative integer $n$,
$K^n(f_L)-\nu(f_L)$ converges to $K^n(f)-\nu(f)$ in ${\mathbb
L}^1(\nu)$. Hence, there exists a subsequence
$K^n(f_{\varphi(L)})-\nu(f_{\varphi(L)})$ converging to $K^n(f)-\nu(f)$
almost surely and in ${\mathbb L}^1(\nu)$. Applying Fatou's lemma, we
infer that
\begin{equation}\label{Fat}
\| K^n(f)-\nu(f)\|_{\mathbb{L}^2(\nu)} \leq \liminf_{L \rightarrow \infty}
\| K^n(f_{\varphi(L)})-\nu(f_{\varphi(L)})\|_{\mathbb{L}^2(\nu)} \, .
\end{equation}
Applying Lemma \ref{covaphi}, for any $g$ in $\Mon_2(M, \nu)$, $\|
K^n(g)-\nu(g)\|_{\mathbb{L}^2(\nu)} \leq 2 \sqrt 2 \phi_{1, {\bf
Y}}^{1/2}(n) M$. Hence
\[
\| K^n(f_{\varphi(L)})-\nu(f_{\varphi(L)})\|_{\mathbb{L}^2(\nu)} \leq
\sum_{\ell=1}^L |a_{\ell, L}|
\| K^n(g_{\ell, \varphi(L)})-\nu(g_{\ell, \varphi(L)})\|_{\mathbb{L}^2(\nu)}
\leq 2 \sqrt 2 \phi_{1, {\bf
Y}}^{1/2}(n) M \, .
\]
From \eqref{Fat}, it follows that $ \|
K^n(f)-\nu(f)\|_{\mathbb{L}^2(\nu)} \leq 2 \sqrt 2 \phi_{1, {\bf
Y}}^{1/2}(n) M$, and \eqref{Gor} holds provided that $\sum_{n>0}
\phi_{1, {\bf Y}}^{1/2}(n) < \infty$. Now, if $T$ is uniformly
expanding, it follows from Section 6.3 in Dedecker and Prieur (2007)
that
 $\phi_{2, {\bf Y}}(n)=O(\rho^n)$ for some $\rho
\in (0, 1)$, and Item 1 is proved.

According to the inequality (4.1) in Dedecker, Gou\"ezel and Merlev\`ede
(2010), we have
\[
\nu \Big( \max_{1 \leq k \leq
  n} \Big |\sum_{i=0}^{k-1} (f \circ T^i
  -\nu(f))\Big|> x\Big)
  \leq
 \nu \Big( 2 \max_{1 \leq k \leq
  n} \Big |\sum_{i=1}^{k} X_i \Big|> x \Big) \, .
\]
Therefore, Item 2 follows from Proposition~\ref{phi} applied to the
sequences $(X_i)_{i \geq 1}$ as soon as \eqref{DDM} holds,  which is
clearly true.

For Item 3, we proceed exactly as in the case of GPM maps, relying on
the approximation $f=\bar f_m+g_m$ given by Lemma~\ref{lem_approxL2}
to apply Proposition~\ref{diag}. Since $g_m \in \Monm_2(1/m, \nu)$,
Proposition~\ref{phi} shows that almost surely
  \begin{equation*}
  \limsup \frac{1}{\sqrt{n\log \log n}} \left|\sum_{i=0}^{n-1} (g_m \circ T^i-\nu(g_m))\right| \leq C/m\, ,
  \end{equation*}
for some constant $C$. Moreover, the proof of Theorem 3.1 in
Merlev\`ede and Rio (2012) shows that the sequence $\bar f_m \circ
T^i-\nu(f_m)$ satisfies an almost sure principle, towards a Gaussian
with variance $\sigma_m^2$. It only remains to show that $\sigma_m^2$
converges to $\sigma^2$. We start from the basic inequality
 \[
 \sigma^2(g_m) \leq 2 \|g_m\|_{\mathbb{L}^2(\nu)}\sum_{n=0}^\infty
 \|K^n(g_m) - \nu(g_m)\|_{\mathbb{L}^2(\nu)}\, .
 \]
Arguing as in \eqref{Fat}, we infer that
\[
\sigma^2(g_m) \leq 16 m^{-2} \sum_{k=0}^\infty\phi_{1, {\bf Y}}^{1/2}(k)
\]
and the series on the right hand side is finite since $\phi_{1, {\bf
Y}}(n)=O(\rho^n)$ for some $\rho \in (0, 1)$. Therefore,
$\sigma^2(g_m)$ converges to $0$. \qed

%%%%%%%%%%%%%%%%%%%%%%%%%%%%%%% BIBLIOGRAPHY %%%%%%%%%%%%%%%%%%%%%%

\end{document}